\theoremstyle{plain}
\newtheorem{theorem}{Theorem}[section]
\newtheorem{corollary}[theorem]{Corollary}
\newtheorem{proposition}[theorem]{Proposition}
\newtheorem{conjecture}[theorem]{Conjecture}
\newtheorem*{theorem*}{Theorem}
\newtheorem*{r-conjecture}{The Ramos conjecture}
\newtheorem*{claim*}{Claim}
\theoremstyle{definition}
\newtheorem{example}[theorem]{Example}
\newtheorem{remark}[theorem]{Remark}
\newcommand{\R}{\mathbb{R}}
\newcommand{\C}{\mathcal{C}}
\newcommand{\N}{\mathbb{N}}
\newcommand{\Z}{\mathbb{Z}}
\newcommand{\HH}{\mathcal{H}}
\newcommand{\bb}{\mathfrak h}
\newcommand\Sym{\mathfrak{S}}
\newcommand\oo{\mathfrak{o}}
\newcommand\Wk{\Sym^\pm}
\newcommand{\im}{\operatorname{im}}
\newcommand{\conn}{\operatorname{conn}}
\newcommand{\sk}{\operatorname{sk}}
\newcommand{\relint}{\operatorname{relint}}
\begin{document}

\title{More bisections by hyperplane arrangements}

\author[Blagojevi\'c]{Pavle V. M. Blagojevi\'{c}}
\thanks{The research by Pavle V. M. Blagojevi\'{c} leading to these results has
        received funding from the grant ON 174024 of the Serbian Ministry of Education and Science, and from DFG via the Collaborative Research Center TRR 109 ``Discretization in Geometry and Dynamics''.}
\address{Inst. Math., FU Berlin, Arnimallee 2, 14195 Berlin, Germany\hfill\break
\mbox{\hspace{4mm}}Mat. Institut SANU, Knez Mihailova 36, 11001 Beograd, Serbia}
\email{blagojevic@math.fu-berlin.de}
\email{pavleb@mi.sanu.ac.rs}
\author[Dimitrijevi\'{c} Blagojevi\'{c}]{Aleksandra Dimitrijevi\'{c} Blagojevi\'{c}}
\thanks{The research by Aleksandra Dimitrijevi\'{c} Blagojevi\'{c} leading to these results has
        received funding from the grant ON 174024 of the Serbian Ministry of Education and Science.}
\address{Mat. Institut SANU, Knez Mihailova 36, 11001 Beograd, Serbia}
\email{aleksandra1973@gmail.com}
\author[Karasev]{Roman Karasev}
\thanks{The research by Roman~Karasev leading to these results has received funding from the Federal professorship program grant 1.456.2016/1.4, and from the Russian Foundation for Basic Research grants 18-01-00036 and  19-01-00169.}
\address{Moscow Institute of Physics and Technology, Institutskiy per. 9, Dolgoprudny, Russia 141700\hfill\break
\mbox{\hspace{4mm}}Institute for Information Transmission Problems RAS, Bolshoy Karetny per. 19, Moscow, Russia 127994}
\email{r\_n\_karasev@mail.ru}
\author[Kliem]{Jonathan Kliem}
\address{Inst. Math., FU Berlin, Arnimallee 2, 14195 Berlin, Germany}
\thanks{The research by Jonathan Kliem leading to these results has received funding by the DFG via the Berlin Mathematical School.}
\email{jonathan.kliem@fu-berlin.de}


\begin{abstract}
A union of an arrangement of affine hyperplanes $\HH$ in $\R^d$ is the real algebraic variety associated to the principal ideal generated by the polynomial $p_{\HH}$ given as the product of the degree one polynomials which define the hyperplanes of the arrangement.
A finite Borel measure on  $\R^d$ is bisected by the arrangement of affine hyperplanes $\HH$ if the measure on the ``non-negative side'' of the arrangement $\{x\in\R^d : p_{\HH}(x)\geq 0\}$ is the same as the measure on the ``non-positive'' side of the arrangement $\{x\in\R^d : p_{\HH}(x)\leq  0\}$.

In 2017 Barba, Pilz \& Schnider considered special, as well as modified cases of the following measure partition hypothesis:
For a given collection of $j$ finite Borel measures on $\R^d$ there exists a $k$-element affine hyperplane arrangement that bisects each of the measures into equal halves simultaneously.
They showed that there are simultaneous bisections in the case when $d=k=2$ and $j=4$.
Furthermore, they conjectured that every collection of $j$ measures on $\R^d$ can be simultaneously bisected with a $k$-element affine hyperplane arrangement provided that $d\geq \lceil j/k \rceil$.
The conjecture was confirmed in the case when $d\geq j/k=2^a$ by Hubard and Karasev in 2018.

In this paper we give a different proof of the Hubard and Karasev result using the framework of Blagojevi\'c, Frick, Haase \& Ziegler (2016), based on the equivariant relative obstruction theory of tom Dieck, which was developed for handling the Gr\"unbaum--Hadwiger--Ramos hyperplane measure partition problem.
Furthermore, this approach allowed us to prove even more, that for every collection of $2^a(2h+1)+\ell$ measures on $\R^{2^a+\ell}$, where $1\leq \ell\leq 2^a-1$, there exists a $(2h+1)$-element affine hyperplane arrangement that bisects all of them simultaneously.
Our result was extended to the case of spherical arrangements and reproved by alternative methods in a beautiful way by Crabb in 2020.
\end{abstract}



\date{\today}
\dedicatory{Dedicated to \v{Z}arko Mijajlovi\'c on the occasion of his 70th birthday}

\maketitle

\section{Introduction and statement of main results}
\label{sec : Introduction and statement of main results}

Let $d\geq 1$ be an integer.
An {\em affine hyperplane} in the $d$-dimensional Euclidean space $\R^d$
is determined by a unit vector $u\in S(\R^d)$ in $\R^d$ and a scalar $a\in\R$ as follows:
\[
H_{u,a}:=\{x\in\R^d : \langle x,u\rangle = a\},
\]
where $\langle\cdot,\cdot\rangle$ denotes the standard Euclidean scalar product.
In this description the sets $H_{u,a}$ and $H_{-u,-a}$ coincide.
An {\em oriented affine hyperplane} in $\R^d$ determined by a unit vector $u\in S(\R^d)$ and a scalar $a\in\R$ is the triple $H(u,a):=(H_{u,a},u,a)$.
The set 
of all oriented affine hyperplanes is endowed with a $\Z/2$-action given by the orientation change $H(u,a)\longmapsto H(-u,-a)$.
To each oriented affine hyperplane $H(u,a)$ in $\R^d$ we associate the linear polynomial function $p_{u,a}\colon \R^d\longrightarrow\R$ given by $p_{u,a}(x):= \langle x,u\rangle -a$ for $x\in\R^d$.
In particular, $H_{u,a}=\{x\in\R^d : p_{u,a}(x) = 0\}$.
Furthermore, $p_{u,a}(x)=-p_{-u,-a}(x)$.

\medskip
Let $k\geq 1$ be an integer.
A {\em $k$-element affine hyperplane arrangement} $\HH$ in $\R^d$ is an ordered $k$-tuple of oriented affine hyperplanes in $\R^d$.
To any $k$-element affine hyperplane arrangement
$
\HH=(H(u_1,a_1),\dots,H(u_k,a_k))
$
we associate the polynomial function $p_{\HH}\colon \R^d\longrightarrow\R$ defined by
\[
p_{\HH}(x):= \prod_{i=1}^k p_{u_i,a_i}(x).
\]
The {\em union} of the arrangement $\HH$ in $\R^d$ is the real affine variety
\[
\{x\in\R^d : p_{\HH}(x)=0\}.
\]
A $k$-element affine hyperplane arrangement $\HH=(H(u_1,a_1),\dots,H(u_k,a_k))$ in $\R^d$ is {\em essential} if
\[
H(u_r,a_r)\neq H(u_s,a_s)\qquad\text{and}\qquad H(u_r,a_r)\neq H(-u_s,-a_s),
\]
for all $1\leq r< s \leq k$.
As expected, a $k$-element affine hyperplane arrangement is {\em non-essential} if it is not essential.

\medskip
Let $\mu$ be a {\em nice} measure on $\R^d$, meaning that $\mu$ is a finite Borel measure on $\R^d$ that vanishes on every affine hyperplane in $\R^d$.
A $k$-element arrangement $\HH$  {\em bisects} the family of nice measures  $\mathcal{M}=(\mu_1, \dots, \mu_j)$ if for every $1\leq r\leq j$:
\[
\mu_r \big(\{ x\in\R^d : p_{\HH}(x)\geq 0\}\big)=\mu_r \big(\{ x\in\R^d : p_{\HH}(x)\leq 0\}\big)=\frac{\mu_r(\R^d)}2.
\]

\medskip
In other words, we are looking for an essential affine hyperplane arrangement and a coloring of the connected components of the complement of its union into two colors with the property that no closures of any two components of the same color share a common facet.
This provides a bisection of the space into two parts corresponding to the colors and we ask that this partition bisects every one of the given measures into equal halves.

\medskip
In this paper, motivated by the recent work of Barba, Pilz \& Schnider \cite{PizzaCuttings} we study the set $\Lambda\subseteq\N^3$ of all triples $(d,j,k)$ of positive integers such that for every collection of $j$ nice measures in $\R^d$ there exists a $k$-element affine hyperplane arrangement in $\R^d$ that bisects these measures.
It is not hard to observe that the set  $\Lambda$ has the following property:
\[
(d,j,k)\in\Lambda\quad\Longrightarrow\quad (d',j,k)\in\Lambda \quad\text{for all }d'\geq d.
\]
Furthermore, the ham sandwich theorem is equivalent to the inclusion 
\[
\{(d,j,1) : d\geq j\geq 1 \}\subseteq \Lambda .
\]

\begin{figure}
\centering
\includegraphics[scale=0.8]{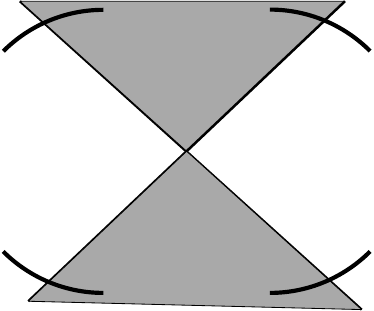}
\caption{\small Illustration of a black and white bisection of four measures on the plane by an essential $2$-element affine hyperplane arrangement.}
\label{fig}
\end{figure}

\medskip
The first description of the set $\Lambda$ follows by considering $j$ pairwise disjoint intervals on a moment curve in $\R^d$ as measures, counting the number of intersection points of a $k$-element affine hyperplane arrangement  with the moment curve (at most $dk$ points) and comparing it with the minimal number of points needed for a bisection of $j$ intervals (at least $j$ points).
Consequently, we get
\[
(d,j,k)\in\Lambda\quad\Longrightarrow\quad dk\geq j.
\]
The idea of considering intervals on a moment curve as measures in the context of the Gr\"unbaum--Hadwiger--Ramos hyperplane measure partition problem originates from the work of Avis \cite{avis1984}, and was further used in this context by Ramos \cite{ramos1996equipartition} and others.
For a detailed review of the Gr\"unbaum--Hadwiger--Ramos hyperplane mass partition problem see for example \cite{BlagojevicFrickHaaseZiegler1} and the references therein.
Thus, it is natural to make the following conjecture, see also \cite[Conj.\,1]{PizzaCuttings}.

\begin{conjecture}
	\label{conjecture}
	Let $d\geq 1$, $j\geq 1$ and $k\geq 1$ be integers.
	If $d\geq \lceil j/k \rceil$, then $(d,j,k)\in\Lambda$.
\end{conjecture}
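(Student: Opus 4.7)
The plan is to recast the bisection problem in the configuration-space/test-map framework of Blagojevi\'c, Frick, Haase \& Ziegler, and then to obstruct the relevant equivariant map by relative equivariant obstruction theory. Parametrize oriented affine hyperplanes by points of the sphere $S^d$: a unit vector $v=(v_0,v_1,\dots,v_d)\in S^d$ encodes the affine functional $p_v(x):=v_0+v_1x_1+\cdots+v_dx_d$, and the antipodal map $v\mapsto -v$ realizes orientation reversal. Take the configuration space of ordered $k$-arrangements to be $X:=(S^d)^k$, carrying the action of the Weyl group $\Wk=(\Z/2)^k\rtimes \Sym_k$ by coordinate-wise antipodes and factor permutations. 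Let $U$ denote the real one-dimensional sign representation of $\Wk$ on which each $\Z/2$-factor acts by $-1$ while $\Sym_k$ acts trivially. For a family $\mathcal{M}=(\mu_1,\dots,\mu_j)$ define the test map $\Phi\colon X\to U^{\oplus j}$ by
\[
\Phi_r(v_1,\dots,v_k)\;:=\;\int_{\R^d}\prod_{i=1}^{k}\mathrm{sgn}\bigl(p_{v_i}(x)\bigr)\,d\mu_r(x),\qquad 1\leq r\leq j.
\]
A direct check shows that $\Phi$ is $\Wk$-equivariant and that $\Phi(\HH)=0$ if and only if $\HH$ bisects every $\mu_r$.

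The conjecture thereby reduces to showing that no $\Wk$-equivariant map $(S^d)^k\to S(U^{\oplus j})$ exists when $d\geq\lceil j/k\rceil$. I would set up a relative obstruction via the $\Wk$-invariant subcomplex $A\subset X$ of non-essential configurations: those in which some $v_i\in\{\pm e_0\}$ (producing a ``hyperplane at infinity'' with $p_{v_i}\equiv\pm 1$) or some $v_r=\pm v_s$ (yielding either a repeated or canceling pair). On $A$ the test map admits a canonical nonzero $\Wk$-equivariant extension into $S(U^{\oplus j})$: a degenerate constant factor collapses the problem to $k-1$ hyperplanes bisecting the same $j$ measures, while a repeated or canceling pair forces $p_\HH$ to be a signed perfect square and so gives an explicit non-bisecting sign. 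The existence of a global non-vanishing equivariant lift is then equivalent to the vanishing of a primary obstruction class in $H^{j}_{\Wk}\bigl(X,A;\,\tilde{\mathbb Z}\bigr)$, with coefficients twisted by the orientation character of $U^{\oplus j}$.

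The computational plan is to evaluate this class by restriction to the Sylow $2$-subgroup $(\Z/2)^k\subset\Wk$ and by passing to $\F_2$-coefficients, where the Serre spectral sequence of the Borel fibration $(S^d)^k\hookrightarrow E\Wk\times_{\Wk}(S^d)^k\to B\Wk$ realizes $H^*_{(\Z/2)^k}(X;\F_2)$ as a controlled module over $\F_2[t_1,\dots,t_k]$, inside which the obstruction is a monomial in the Euler classes of the $k$ sign line bundles. The dimension hypothesis $dk\geq j$ places this monomial in a potentially nonzero degree, and Lucas/Kummer congruences on $\binom{2^a}{\cdot}\pmod 2$ force the relevant Steenrod operation on it to survive; one then pushes up from $(\Z/2)^k$ to $\Wk$ via $\Sym_k$-invariants. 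The hard part is precisely this last step: when $k$ contains a large power of $2$, the $\Sym_k$-transfer can absorb and kill the mod-$2$ obstruction, and it is this subtlety that confines the unconditional conclusion to the parameter range $(d,j,k)=(2^a+\ell,\,2^a(2h+1)+\ell,\,2h+1)$ announced in the abstract. Proving the full Conjecture~\ref{conjecture} would require sharper control over the $\Sym_k$-invariants and transfers beyond the prime-power divisibility regime; this is the principal outstanding obstacle.
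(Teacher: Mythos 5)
The statement you were asked about is not a theorem of the paper at all: it is Conjecture~\ref{conjecture}, which the paper explicitly leaves open and only confirms in special cases (Theorem~\ref{th : main 2.5}\eqref{th : main 2.5 : case 1} handles $d=2^a$, $j=dk$; Theorem~\ref{th : main 2.5}\eqref{th : main 2.5 : case 2} handles the additional family $d=2^a+\ell$, $k$ odd). So there is no ``paper's own proof'' to compare against, and you were right not to claim a complete argument. Your final paragraph correctly names the reason: after reducing to the non-existence of a $\Wk_k$-equivariant map, the primary obstruction is governed mod~$2$ by the count of bisecting arrangements in a fundamental cell, and this count --- $\tfrac{1}{k!}\binom{dk}{d,\dots,d}$, respectively $\binom{(d-\ell)k+\ell}{d}\tfrac{1}{(k-1)!}\binom{(d-\ell)(k-1)}{d-\ell,\dots,d-\ell}$ --- is generically even (Lemma~\ref{lem : number theory}). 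When it is even the obstruction class vanishes and the method is genuinely stuck, not merely unoptimized: Corollary~\ref{cor : existence if and only if} shows that the equivariant map does exist in those cases, so one cannot hope to close the conjecture by this route without a fundamentally new idea.

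Two technical mismatches with the paper's machinery are worth flagging. First, for the obstruction-theoretic part you work with the product $(S^d)^k$ relative to the degeneracy locus, whereas the paper runs the relative equivariant obstruction theory on the \emph{join} $X_{d,k}=(S^d)^{*k}$, which is a $(dk+k-1)$-sphere with the explicit $\Wk_k$-CW structure from \cite{BlagojevicFrickHaaseZiegler2}; the extra $W_k$ summand in the target keeps dimensions aligned and lets the primary obstruction be the only one. The product scheme is used in the paper only for the Fadell--Husseini index computation of Theorem~\ref{th : main 1}, and a direct relative obstruction theory on $(S^d)^k$ is considerably messier because of the non-free strata. Second, your test map $\Phi_r=\int\prod_i\operatorname{sgn}(p_{v_i})\,d\mu_r$ is the same as the paper's $\Phi_{\mathcal M}$ in substance, but on the product it is continuous only after an extension across the poles $\pm e_{d+1}$; the paper handles this by explicitly extending from $(S^d\setminus\{\pm e_{d+1}\})^k$. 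Beyond these points your diagnosis is accurate: the restriction to $(\Z/2)^k$ detects a Lucas-type monomial, the passage back to $\Wk_k$ through $\Sym_k$ introduces the $\tfrac{1}{k!}$ (or $\tfrac{1}{(k-1)!}$) factor, and that is exactly where the conjectured range $d\geq\lceil j/k\rceil$ escapes this obstruction method.
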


\medskip
The main result of this paper is derived from the so called ``join configuration space~/ test map scheme'' and an application of two different relative equivariant obstruction theories of Bredon \cite{bredon2006equivariant} and tom Dieck \cite{tom1987transformation}.
The join scheme was introduced for the first time in \cite{blagojevic2011}, while the relative obstruction theory framework for the study of the Gr\"unbaum--Hadwiger--Ramos hyperplane mass partition problem was  developed only in \cite{BlagojevicFrickHaaseZiegler2}.
In particular, in the first part of the theorem we give a different proof of the result by Hubard and Karasev \cite[Thm.\,1]{HubardKarasev}, which in the special case $d=k=2$ and $j=4$ is due to Barba, Pilz \& Schnider \cite[Thm.\,2.2]{PizzaCuttings}.

\begin{theorem}
\label{th : main 2.5}
Let $d\geq 1$, $j\geq 1$ and $k\geq 2$ be integers.
If
\begin{compactenum}[\rm \quad (a)]

\item \label{th : main 2.5 : case 1}
$dk=j$ and $d=2^a$ for some integer $a\geq 0$, or
		
\item \label{th : main 2.5 : case 2}
$(d-\ell)k+\ell=j$, $k$ is odd, $d=2^a+\ell$ for some integers $a\geq 1$, and $1\leq \ell \leq 2^a-1$,
	
\end{compactenum}
then $(d,j,k)\in\Lambda$.
\end{theorem}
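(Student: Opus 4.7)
The plan is to recognize that Theorem \ref{th : main 2.5} contains no new geometric or topological content: it is merely the explicit arithmetic reformulation of Theorem \ref{th : main 2} obtained by substituting the characterizations of the two relevant odd multinomials supplied by Lemma \ref{lem : number theory}. Accordingly, I would structure the proof as a direct verification that the hypotheses of Theorem \ref{th : main 2.5} fall within the applicability range of Theorem \ref{th : main 2} via Lemma \ref{lem : number theory}, splitting into the two cases.

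For case \eqref{th : main 2.5 : case 1}, I would assume $dk = j$ and $d = 2^a$ with $a\ge 0$. By Lemma \ref{lem : number theory}(i), this is precisely the condition for $\frac{1}{k!}\binom{dk}{d,\dots,d}$ to be odd. Theorem \ref{th : main 2}(i) then applies verbatim and delivers $(d',j,k)\in\Lambda$ for all $d'\ge d$.

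For case \eqref{th : main 2.5 : case 2}, I would assume $(d-\ell)k+\ell = j$, $k$ odd, $d = 2^a+\ell$ with $a\ge 1$, and $1\le \ell\le 2^a-1$. The only thing to check before Lemma \ref{lem : number theory}(ii) can be invoked is the side constraint $2\le 2\ell\le d-1$. The lower inequality is immediate from $\ell\ge 1$, and the upper inequality is the chain of equivalences $\ell\le 2^a-1 \Leftrightarrow 2\ell\le 2^a+\ell-1 = d-1$, which is exactly our hypothesis. Thus Lemma \ref{lem : number theory}(ii) shows that $\binom{(d-\ell)k+\ell}{d}\cdot\frac{1}{(k-1)!}\binom{(d-\ell)(k-1)}{d-\ell,\dots,d-\ell}$ is odd, and Theorem \ref{th : main 2}(ii) yields the required conclusion.

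The only nontrivial arithmetic step in this plan is the equivalence $\ell\le 2^a-1 \Leftrightarrow 2\ell\le d-1$; it is essentially a line of algebra. There is no genuine obstacle at the level of Theorem \ref{th : main 2.5} itself, since all the substantive content has been pushed upstream into Theorem \ref{th : main 2} (equivariant obstruction theory on the join configuration space) and Lemma \ref{lem : number theory} (a Kummer/Lucas style computation of $2$-adic valuations of multinomial coefficients).
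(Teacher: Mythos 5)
Your proposal is correct and is exactly the paper's intended argument: the paper presents Theorem \ref{th : main 2.5} as a direct consequence of Theorem \ref{th : main 2} and Lemma \ref{lem : number theory}, and you have simply spelled out the routine verification (including the side constraint $2\le 2\ell\le d-1$, which indeed unwinds to $\ell\le 2^a-1$). No further comment is needed.
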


\medskip
Thus, Theorem \ref{th : main 2.5}\eqref{th : main 2.5 : case 1} settles Conjecture \ref{conjecture} in the case when
$dk-j=2^a-j=0$, while Theorem \ref{th : main 2.5}\eqref{th : main 2.5 : case 2} gives the positive difference $dk-j=\ell k-\ell$, where $k\geq 3$ is odd, and consequently does not settle the conjecture in any additional case.
The results of Theorem \ref{th : main 2.5} were reproved by Crabb \cite{Crabb2020} in the broader setting of spherical arrangements by intriguing evaluations of pull-backs of twisted Euler classes.

\medskip
In order to illustrate the results of Theorem \ref{th : main 2.5} we fix the parameter $k=3$ and consider the set $\Lambda[k=3]:=\{ (j,d)\in \N^2 : (d,j,3)\in\Lambda\}$.
In Figure~\ref{fig3} we depicted with a black dot for each $j$ the minimal $d$ such that $(j,d) \in \Lambda[k=3]$ as Conjecture \ref{conjecture} claims.
We circled the upper bounds for the dimension $d$ obtained from an application of Theorem \ref{th : main 2.5}\eqref{th : main 2.5 : case 1}.
In grey we circled the improved upper bounds on $d$ derived from Theorem \ref{th : main 2.5}\eqref{th : main 2.5 : case 2}.

\begin{figure}[b]
\centering
\includegraphics[scale=0.61]{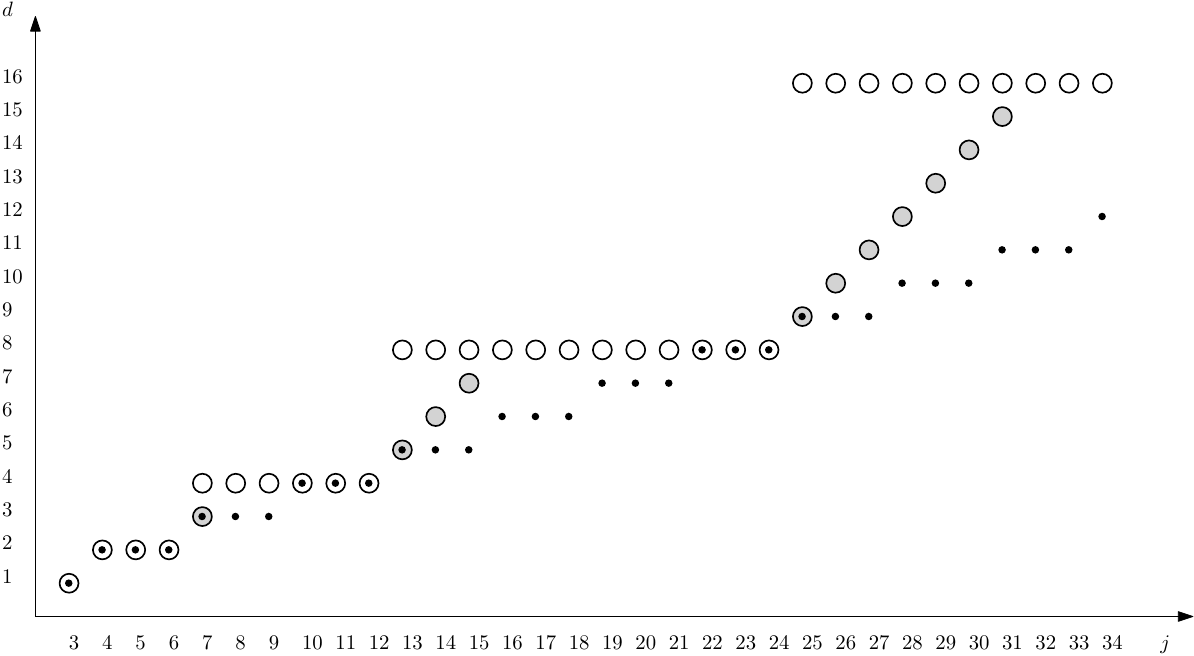}
\caption{\small The shape of the set $\{ (j,d)\in \N^2 : (d,j,3)\in\Lambda\}$ as suggested by Conjecture~\ref{conjecture} (black dot), Theorem~\ref{th : main 2.5}\eqref{th : main 2.5 : case 1} (circled) and Theorem~\ref{th : main 2.5}\eqref{th : main 2.5 : case 2} (circled in grey).}
\label{fig3}
\end{figure}

\medskip
The main result of this paper, stated in Theorem \ref{th : main 2.5}, is proven in the following steps:
\begin{compactitem}[---]
\item The problem regarding the existence of a bisection of a collection of measures in $\R^d$ by a $k$-element affine hyperplane arrangement is connected to the question about the non-existence of specially constructed $\Wk_k$-equivariant maps beween spheres  $S^{(d+1)k-1}\longrightarrow S^{j+k-2}$, see Section \ref{sec : scheme}.
\item The claim of Theorem \ref{th : main 2.5} is obtained as a consequence of the non-existence of $\Wk_k$-equivariant maps $S^{(d+1)k-1}\longrightarrow S^{j+k-2}$ with some specific properties, as explained in Theorem \ref{th : CS/TM}. 
	The non-existence of the relevant map is proved via an application of the equivariant relative obstruction theory of tom Dieck, see Section \ref{sec : proof of the second result}.
\end{compactitem}

\bigskip
\noindent
{\em Acknowledgment.} We would like to thank Alfredo Hubard, Tatiana Levinson and Arkadiy Skopenkov for useful discussions.
The authors thank Matija Blagojevi\'c for his work on the manuscript which resulted in several improvements of the text.
We are also grateful to the referees for valuable suggestions and comments.

\section{From a partition problem to a Borsuk--Ulam type problem}
\label{sec : scheme}
In this section we relate the problem of describing the set $\Lambda\in\N^3$ with a topological problem of the Borsuk--Ulam type.
For that we develop both the product and join configuration space / test map scheme even we apply only the join scheme.
The join scheme can be efficiently used only in combination with the relative equivariant obstruction theory, as demonstrated in \cite{BlagojevicFrickHaaseZiegler2}.

\medskip
The space of all oriented affine hyperplanes in $\R^d$ can be identified with the sphere $S^d=S(\R^{d+1})$ of unit vectors in $\R^{d+1}$ where the north pole $e_{d+1}:=(0,\dots,0,1)$ and the south pole $-e_{d+1}=(0,\dots,0,-1)$ are interpreted as ``extra'' oriented affine hyperplanes at infinity.
To see this place $\R^d$ into $\R^{d+1}$ on ``height one'', that is via the embedding $(x_1,\dots,x_d)\longmapsto (x_1,\dots,x_d,1)$.
Every oriented affine hyperplane $H(u,a)=(H_{u,a},u,a)$ in $\R^d$ spans the uniquely oriented linear hyperplane $H(w(u,a),0)=(H_{w(u,a),0},w(u,a),0)$ in $\R^{d+1}$.
The corresponding unit normal vector $w(u,a)$ determines a point on the sphere $S^d$.
Furthermore, the associated polynomial function $p_{w(u,a),0}\colon \R^{d+1}\longrightarrow\R$, given by $p_{w(u,a),0}(w):=\langle w,w(u,a)\rangle$ for $w\in\R^{d+1}$, restricts on the embedded $\R^d=\R^d\times\{1\}$ to the  polynomial function $p_{u,a}$, that is $p_{u,a}=p_{w(u,a),0}|_{\R^d\times\{1\}}$.
The $\Z/2$ action on the space of all oriented affine hyperplanes given by the change of orientation translates into the antipodal action on the sphere, $w\longmapsto -w$ for $w\in S^{d}$.

\medskip
Following the presentation in \cite[Sec.\,2]{BlagojevicFrickHaaseZiegler2} we consider the following configuration spaces that parameterize all $k$-element affine hyperplane arrangements in $\R^d$:
\begin{compactitem}[\quad ---]
\item the \emph{join configuration space} $X_{d,k}\cong (S^d)^{*k}\cong S(\R^{(d+1)\times k})$ is a sphere of dimension $dk + k-1$, (recall the homeomorphism between unit spheres of Euclidan spaces $S(E_1\oplus E_2)\cong S(E_1) *S(E_2)$), and
\item the \emph{product configuration space} $Y_{d,k}\cong (S^d)^k$.
\end{compactitem}

\medskip
Both configuration spaces are equipped with an action of the group of signed permutations $\Wk_k=(\Z/2)^k \rtimes \Sym_k$.
To define an action on $X_{d,k}$ we recall that its typical element can be presented as formal ordered convex combinations $\lambda_1 w_1 + \dots + \lambda_k w_k$, where $\lambda_i \geq 0,\,\sum_{i=1}^k \lambda_i = 1$ and $w_i \in S^d$.
Now each copy of $\Z/2$ in $(\Z/2)^k\subseteq\Wk_k$ acts antipodally on the appropriate sphere $S^d$, and the symmetric group $\Sym_k\subseteq\Wk_k$ acts by permuting factors in the product.
Explicitly, for $((\beta_1, \dots, \beta_k) \rtimes \tau) \in \Wk_k$ and $\lambda_1 w_1 + \dots + \lambda_k w_k \in X_{d,k}$ we set:
\begin{multline*}
((\beta_1, \dots, \beta_k) \rtimes \tau)\cdot (\lambda_1 w_1 + \dots + \lambda_k w_k) := \\
\lambda_{\tau^{-1}(1)} (-1)^{\beta_1} w_{\tau^{-1}(1)} + \dots +  \lambda_{\tau^{-1}(k)} (-1)^{\beta_k} w_{\tau^{-1}(k)}.
\end{multline*}
Alternatively, we can see the join configuration space $X_{d,k}$ as the unit sphere of the real $\Wk_k$-representation  $\R^{(d+1)\times k}$.
The action of $\Wk_k$ on $\R^{(d+1)\times k}$ we consider is given by:
\[
((\beta_1, \dots, \beta_k) \rtimes \tau)\cdot (u_1, \dots , u_k) :=
\big((-1)^{\beta_1} u_{\tau^{-1}(1)}, \dots ,   (-1)^{\beta_k} u_{\tau^{-1}(k)}\big),
\]
for $((\beta_1, \dots, \beta_k) \rtimes \tau) \in \Wk_k$ and $(u_1, \dots , u_k)\in \R^{(d+1)\times k}$.

\medskip
The subspace
\[
\big\{ \tfrac1k w_1+\dots+\tfrac1k w_k \in X_{d,k} : (w_1,\dots, w_k)\in  Y_{d,k}\big\}
\]
of the join $X_{d,k}$ is homeomorphic to  $Y_{d,k}$ and moreover $\Wk_k$-invariant.
Thus we identify it with $Y_{d,k}$, and the restriction action from $X_{d,k}$ induces an $\Wk_k$-action on $Y_{d,k}$.
For $k\geq 2$ action of $\Wk_k$ on both $X_{d,k}$ and $Y_{d,k}$ is not free.
The subspaces of points of $X_{d,k}$ and $Y_{d,k}$ with non-trivial stabilizers with respect to the $\Wk_k$-action are
\begin{multline*}
X_{d,k}^{>1}:  = \{\lambda_1 w_1 + \dots + \lambda_k w_k \in X_{d,k} :
\lambda_1\cdots\lambda_k=0,\text{  or  }\lambda_s=\lambda_r \\
\text{ with }w_s=\pm w_r\text{ for some }1\leq s<r\leq k\},
\end{multline*}
and
\[
Y_{d,k}^{>1} : = \{(w_1,\ldots, w_k)\in Y_{d,k}: w_s=\pm w_r\text{ for some }1\leq s<r\leq k\}.
\]
For future use we point out the  subspace of $X_{d,k}^{>1}$ given by:
\[
(X_{d,k}^{>1})':=  \{\lambda_1 w_1 + \dots + \lambda_k w_k\in X_{d,k}^{>1} : \lambda_1\cdots\lambda_k=0 \}.
\]

\medskip
Let $V\cong\R$ be the real $1$-dimensional $\Wk_k$-representation with action defined to be antipodal  for every copy of $\Z/2$ in $(\Z/2)^k\subseteq\Wk_k$, and trivial for every element of the symmetric group $\Sym_k\subseteq\Wk_k$.
More precisely, when $((\beta_1, \dots, \beta_k) \rtimes \tau) \in \Wk_k$ and $v\in V$ we have
\[
((\beta_1, \dots, \beta_k) \rtimes \tau)\cdot v :=
(-1)^{\beta_1} \cdots (-1)^{\beta_k}\, v.
\]

\medskip
Next consider the vector space $\R^k$ and its vector subspace
\[
W_k=\big\{(y_1,\ldots,y_k)\in\R^k : \sum_{i=1}^k y_i=0\big\}.
\]
The group $\Wk_k$ acts on $\R^k$ by permuting coordinates, that is,  for $((\beta_1, \dots, \beta_k) \rtimes \tau) \in \Wk_k$ and $(y_1,\ldots,y_k)\in\R^k$ we get
\begin{equation}
\label{eq:action_W_k}
((\beta_1, \dots, \beta_k) \rtimes \tau) \cdot (y_1,\ldots,y_k) : = (y_{\tau^{-1}(1)},\ldots,y_{\tau^{-1}(k)}).
\end{equation}
The subspace $W_k\subseteq\R^k$ is $\Wk_k$-invariant, and therefore $W_k$ is an $\Wk_k$-subrepresentation of $\R^k$.

\medskip
Now, to an ordered collection $\mathcal{M}=(\mu_1, \dots, \mu_j)$ of nice measures on $\R^d$ we will associate two continuous $\Wk_k$-equivariant maps $\Phi_{\mathcal{M}}$ and $\Psi_{\mathcal{M}}$.

\medskip
First, we define the continuous map
\[
\Phi_{\mathcal{M}} \colon Y_{d,k} \longrightarrow V^{\oplus j}
\]
to be the unique continuous extension of the map $(S^d{\setminus}\{e_{d+1},-e_{d+1}\} )^k \longrightarrow V^{\oplus j}$
given by
\begin{multline}\label{map:to extend}
\HH=(H(u_1,a_1),\dots,H(u_k,a_k))=(w(u_1,a_1),\dots,w(u_k,a_k))\longmapsto \\
\Big( \mu_i (\{ x\in\R^d : p_{\HH}(x)\geq 0\})-\mu_i (\{ x\in\R^d : p_{\HH}(x)\leq 0\}) \Big)_{i \in \{1,\dots, j\}}.
\end{multline}
Indeed, the map \eqref{map:to extend} is the restriction of the continuous function $(S^d)^k \longrightarrow V^{\oplus j}$ defined by
\begin{multline*}
(w_1,\dots,w_k)\longmapsto
\Big( \mu_i (\{ (x,1)\in\R^{d+1} : p_{w_1,\dots,w_k}(x,1)\geq 0\})- \\  \mu_i (\{ (x,1)\in\R^{d+1} : p_{w_1,\dots,w_k}(x,1)\leq 0\}) \Big)_{i \in \{1,\dots, j\}}.
\end{multline*}
Here $p_{w_1,\dots,w_k}\colon\R^{d+1}\longrightarrow\R$ is the continuous function $p_{w_1,\dots,w_k}(w):= \prod_{i=1}^k \langle w,w_i\rangle$.

\medskip
The map $\Phi_{\mathcal{M}}$ is $\Wk_k$-equivariant with respect to the already introduced actions on $Y_{d,k}$ and $V$, assuming the diagonal action on $V^{\oplus j}$.

\medskip
The key property of the map $\Phi_{\mathcal{M}}$ is that \emph{the $k$-element affine hyperplane arrangement $\HH$ in $\R^d$ bisects all the measures from the collection $\mathcal{M}$ if and only if $\Phi_{\mathcal{M}} (\HH) = 0\in V^{\oplus j}$}.

\medskip
The second continuous map we consider
\[
\Psi_{\mathcal{M}} \colon X_{d,k} \longrightarrow W_k\oplus V^{\oplus j}
\]
is defined as follows:
\begin{equation}\label{eq : test map def}
\lambda_1 w_1 +\cdots + \lambda_k w_k \longmapsto \big(\lambda_1-\tfrac{1}{k},\ldots,\lambda_k-\tfrac{1}{k}\big)\oplus \Big((\lambda_1\cdots\lambda_k) \cdot \Phi_{\mathcal{M}} (w_1, \dots, w_k)\Big).
\end{equation}
It is important to notice that the map we have just defined $\Psi_{\mathcal{M}}$ does not depend on the collection $\mathcal{M}$ when considered on the subspace $(X_{d,k}^{>1})'$.
Indeed, if
\[
\lambda_1 w_1 + \dots + \lambda_k w_k\in (X_{d,k}^{>1})',
\]
then
\[
\Psi_{\mathcal{M}}(\lambda_1 w_1 + \dots + \lambda_k w_k)
=
\big(\lambda_1-\tfrac{1}{k},\ldots,\lambda_k-\tfrac{1}{k}\big)\oplus 0\in W_k\oplus V^{\oplus j}.
\]
The map $\Psi_{\mathcal{M}}$ is also $\Wk_k$-equivariant.

\medskip
Similarly, {\em the $k$-element affine hyperplane arrangement
\[
\HH=(H(u_1,a_1),\dots,H(u_k,a_k))=(w(u_1,a_1),\dots,w(u_k,a_k))=(w_{1},\dots,w_{k})
\]
in $\R^d$ bisects all the measures from the collection $\mathcal{M}$ if and only if
\[
\Psi_{\mathcal{M}} \big(\tfrac1k w_{1}+\dots+\tfrac1k  w_{k}\big)= 0\oplus 0\in W_k\oplus V^{\oplus j}.
\]
}

\medskip
From the construction of the $\Wk_k$-equivariant maps $\Phi_{\mathcal{M}}$ and $\Psi_{\mathcal{M}}$ we have deduced the following facts; for a similar construction consult \cite[Prop.\,2.1]{BlagojevicFrickHaaseZiegler2}.

\begin{proposition}
\label{prop:CS/TM}
Let $d\geq 1$, $j\geq 1$ and $k\geq 1$ be integers.
\begin{compactenum}[\rm \quad (a)]
\item \label{item:test-map-zeros}
Let $\mathcal{M}$ be a collection of $j$ nice measures on $\R^d$, and let
\[
\Phi_{\mathcal{M}} \colon Y_{d,k} \longrightarrow V^{\oplus j}
\quad\text{and}\quad
\Psi_{\mathcal{M}} \colon X_{d,k} \longrightarrow W_k\oplus V^{\oplus j}
\]
be the $\Wk_k$-equivariant maps defined above.
If
\[
0\in \im\Phi_{\mathcal{M}}
\qquad\text{or}\qquad
0\in \im\Psi_{\mathcal{M}},
\]
then there exists a $k$-element affine hyperplane arrangement bisecting all the measures in $\mathcal{M}$.
\item \label{item:existence-of-maps}
If there is no $\Wk_k$-equivariant map of either type
\[
Y_{d,k} \longrightarrow S(V^{\oplus j})
\quad\text{or}\quad
X_{d,k} \longrightarrow S(W_k\oplus V^{\oplus j}),
\]
then $(d,j,k)\in\Lambda$.
\end{compactenum}
\end{proposition}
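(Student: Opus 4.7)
The plan is that part (i) follows from unwinding the definitions of $\Phi_{\mathcal{M}}$ and $\Psi_{\mathcal{M}}$, and part (ii) follows from (i) by the standard normalization of nonvanishing equivariant maps, combined with a pushforward argument to pass from $d$ to $d'\geq d$.

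For (i), assume $\Phi_{\mathcal{M}}(w_1,\dots,w_k)=0$ for some $(w_1,\dots,w_k)\in Y_{d,k}$. If no $w_i$ equals a pole $\pm e_{d+1}$, then each $w_i$ corresponds to a genuine oriented affine hyperplane $H(u_i,a_i)$, and the vanishing of $\Phi_{\mathcal{M}}$ is by construction exactly the statement that this $k$-element arrangement bisects every measure in $\mathcal{M}$. In the boundary case, where some $w_i$ is a pole, the explicit description of the continuous extension (as $a_i\to\mp\infty$ the polynomial $p_{u_i,a_i}$ acquires a definite sign uniformly on compacts) shows that the vanishing records bisection of $\mathcal{M}$ by the shorter arrangement obtained by deleting the hyperplanes at infinity, and properness of the $\mu_r$ lets one perturb back to a genuine $k$-element arrangement that still bisects. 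For the $\Psi_{\mathcal{M}}$-case, if $\Psi_{\mathcal{M}}(\lambda_1 w_1+\dots+\lambda_k w_k)=0$, the $W_k$-component forces $\lambda_i=\tfrac1k$ for every $i$, and then $\lambda_1\cdots\lambda_k=k^{-k}>0$ forces $\Phi_{\mathcal{M}}(w_1,\dots,w_k)=0$ from the $V^{\oplus j}$-component, reducing to the previous case.

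For (ii), I would first establish $(d,j,k)\in\Lambda$ via the contrapositive of (i), and then bootstrap to arbitrary $d'\geq d$ via a projection argument on measures. If some collection $\mathcal{M}$ of $j$ proper measures on $\R^d$ admits no bisecting $k$-element arrangement, then by the contrapositive of (i) both $\Phi_{\mathcal{M}}$ and $\Psi_{\mathcal{M}}$ are nowhere zero; normalizing with respect to a $\Wk_k$-invariant inner product on each target yields continuous $\Wk_k$-equivariant maps $Y_{d,k}\to S(V^{\oplus j})$ and $X_{d,k}\to S(W_k\oplus V^{\oplus j})$, contradicting the hypothesis. To pass from $d$ to $d'\geq d$, given any collection $\mathcal{M}'=(\mu'_1,\dots,\mu'_j)$ on $\R^{d'}$, push forward along the coordinate projection $\pi\colon\R^{d'}\to\R^d$; each $\pi_\ast\mu'_r$ is still proper since $\pi^{-1}$ of an affine hyperplane in $\R^d$ is an affine hyperplane in $\R^{d'}$. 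A $k$-element arrangement $\HH=(H_1,\dots,H_k)$ in $\R^d$ bisecting $(\pi_\ast\mu'_1,\dots,\pi_\ast\mu'_j)$ then lifts to the arrangement $\tilde\HH=(\pi^{-1}(H_1),\dots,\pi^{-1}(H_k))$ in $\R^{d'}$, which bisects $\mathcal{M}'$ because $p_{\tilde\HH}(x)=p_{\HH}(\pi(x))$.

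The hard part is the boundary case in (i): when $\Phi_{\mathcal{M}}$ vanishes at a point of $Y_{d,k}$ with some $w_i=\pm e_{d+1}$, the underlying data is an arrangement with fewer than $k$ affine hyperplanes in $\R^d$, and producing a genuine $k$-element bisecting arrangement requires a perturbation step that honestly uses properness and finiteness of the measures. Everything else — unwinding the test map, checking $\Wk_k$-equivariance, normalizing to the unit sphere, and lifting arrangements along the projection $\pi$ — is routine bookkeeping.
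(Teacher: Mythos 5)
Your proposal matches the paper's intent: the paper offers no argument for this proposition, asserting only that it follows ``from the construction'' of the test maps, so your unwinding, normalizing, and projecting is essentially the argument the authors have in mind. The interior case of (i), the normalization step in (ii), and the pushforward along $\pi\colon\R^{d'}\to\R^d$ (with the checks that $\pi_\ast\mu'_r$ remains a proper finite Borel measure because $\pi^{-1}$ of a hyperplane is a hyperplane, and that $p_{\tilde\HH}=p_{\HH}\circ\pi$) are all correct.

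The boundary case of (i), which you rightly flag as the hard part, is not however closed by ``perturbing back,'' and as written there is a real gap. When $\Phi_{\mathcal{M}}$ vanishes at a point with some $w_i=\pm e_{d+1}$, what you obtain is a $(k-1)$-element arrangement $\HH'=(H_1,\dots,H_{k-1})$ bisecting $\mathcal{M}$. Extending it to a bisecting $k$-element arrangement $(H_1,\dots,H_{k-1},H_k)$ is \emph{not} automatic: a short computation shows $(\HH',H_k)$ bisects $\mu_r$ if and only if the signed measure $\sigma_r:=\mu_r|_{\{p_{\HH'}>0\}}-\mu_r|_{\{p_{\HH'}<0\}}$ (which has total mass $0$) satisfies $\sigma_r(\{p_{H_k}>0\})=0$. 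Requiring this for every $r$ is a Borsuk--Ulam-type condition on the $d$-dimensional space of candidate hyperplanes $H_k$ with target $\R^j$, solvable by degree reasons only when $j\le d$; the cases of interest in this paper have $j$ much larger than $d$, so the extension need not exist, and properness of the $\mu_r$ does nothing here. Pushing $H_k$ off to infinity along a fixed direction gives a loop in $\R^j$ that starts and ends at $0$ but need not pass through $0$ in between. To actually close the gap you would need either to argue at the level of measures (a generic small perturbation of $\mathcal{M}$ kills boundary zeros of $\Phi_{\mathcal{M}}$ while the topological obstruction forces some zero to persist, and then take a limit --- with care, since the limiting arrangement may itself degenerate back to the boundary), or to observe that the proposition is applied in this paper only to the situation where zeros on the boundary are excluded a priori, as in Proposition~2.4 and Theorem~2.7, which hypothesize that no non-essential arrangement (and no arrangement parametrized by $Z^{>1}$) bisects $\mathcal{M}$, and via the moment-curve measures $\mathcal{M}_0$ in Section~3.
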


\medskip
The following essential property of the constructed $\Wk_k$-equivariant map  $\Psi_{\mathcal{M}}$ needs a modified approach unlike the one used in \cite[Prop.\,2.2]{BlagojevicFrickHaaseZiegler2}.

\begin{proposition}
\label{prop:homotopy}
Let $d\geq 2$, $j\geq 1$ and $k\geq 2$ be integers with $j\geq d(k-1)+2$.
Let $\mathcal{M}=(\mu_1,\dots,\mu_j)$ and $\mathcal{M'}=(\mu_1',\dots,\mu_j')$ be collections of nice measures on $\R^d$ such that no non-essential $k$-element affine hyperplane arrangement bisects all of them.
Then
\begin{compactenum}[\rm \quad (a)]
\item  $0\notin \im \Psi_{\mathcal{M}}|_{X_{d,k}^{>1}}$, \label{item:avoiding-zero}
\item $\Psi_{\mathcal{M}}|_{(X_{d,k}^{>1})'}=\Psi_{\mathcal{M'}}|_{(X_{d,k}^{>1})'}$, \label{item:eqaul on a subspace}  and
\item $\Psi_{\mathcal{M}}|_{X_{d,k}^{>1}}$ and $\Psi_{\mathcal{M'}}|_{X_{d,k}^{>1}}$ are $\Wk_k$-homotopic as maps
\[
X_{d,k}^{>1}\longrightarrow (W_k\oplus V^{\oplus j}){\setminus}\{0\}
\]
which restrict on the subspace $(X_{d,k}^{>1})'$ to the map given by
\[
\lambda_1w_1+\dots+\lambda_kw_k \longmapsto \big(\lambda_1-\tfrac{1}{k},\ldots,\lambda_k-\tfrac{1}{k}\big)\oplus 0,
\]
where $\lambda_1w_1+\dots+\lambda_kw_k\in (X_{d,k}^{>1})'$ and $ \big(\lambda_1-\tfrac{1}{k},\ldots,\lambda_k-\tfrac{1}{k}\big)\oplus 0\in (W_k\oplus V^{\oplus j}){\setminus}\{0\}$.
\label{item:psi-homotopy}
\end{compactenum}
\end{proposition}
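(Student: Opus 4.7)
My plan is to derive (i) and (ii) directly from the definition of $\Psi_{\mathcal{M}}$, and to establish (iii) by reducing to an equivariant homotopy problem over $Y_{d,k}^{>1}$ which is then solved by $\Wk_k$-equivariant obstruction theory.

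For (i), any $x = \lambda_1 w_1 + \cdots + \lambda_k w_k \in X_{d,k}^{>1}$ is of one of two kinds. If some $\lambda_i = 0$, then the $W_k$-coordinate $\bigl(\lambda_1-\tfrac{1}{k},\ldots,\lambda_k-\tfrac{1}{k}\bigr)$ has an entry equal to $-\tfrac{1}{k}$ and is non-zero. Otherwise all $\lambda_i>0$, which by the definition of $X_{d,k}^{>1}$ forces $w_s=\pm w_r$ for some $s\neq r$; hence $(w_1,\ldots,w_k)$ is non-essential, so $\Phi_{\mathcal{M}}(w)\neq 0$ by hypothesis, and the $V^{\oplus j}$-coordinate $(\lambda_1\cdots\lambda_k)\,\Phi_{\mathcal{M}}(w)$ is non-zero. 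For (ii), $(X_{d,k}^{>1})'$ is by definition the locus $\lambda_1\cdots\lambda_k=0$, which annihilates the $V^{\oplus j}$-coordinate of $\Psi_{\mathcal{M}}$ independently of $\mathcal{M}$, while the $W_k$-coordinate is manifestly measure-independent.

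For (iii), the $W_k$-component of $\Psi_{\mathcal{M}}$ is independent of $\mathcal{M}$ and vanishes precisely at the barycenter locus $\lambda_1=\cdots=\lambda_k=\tfrac{1}{k}$, which intersects $X_{d,k}^{>1}$ in the image of $Y_{d,k}^{>1}$. Hence constructing a $\Wk_k$-equivariant homotopy $H_t$ between $\Psi_{\mathcal{M}}|_{X_{d,k}^{>1}}$ and $\Psi_{\mathcal{M'}}|_{X_{d,k}^{>1}}$ into $(W_k\oplus V^{\oplus j})\setminus\{0\}$ reduces to producing a $\Wk_k$-equivariant homotopy $h_t\colon Y_{d,k}^{>1}\to V^{\oplus j}\setminus\{0\}$ between $\Phi_{\mathcal{M}}|_{Y_{d,k}^{>1}}$ and $\Phi_{\mathcal{M'}}|_{Y_{d,k}^{>1}}$. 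I will obtain $h_t$ from $\Wk_k$-equivariant obstruction theory \cite{tom1987transformation}. A cycle-by-cycle analysis shows that for any stabilizer element $((\beta_1,\ldots,\beta_k),\tau)$ of a point of $Y_{d,k}^{>1}$ the sum $\sum_i\beta_i$ is even: traversing a cycle $(i_1,\ldots,i_\ell)$ of $\tau$ forces $w_{i_1}=(-1)^{\beta_{i_1}+\cdots+\beta_{i_\ell}}w_{i_1}$, and $w_{i_1}\neq 0$ makes the cyclic sum even. Consequently every isotropy subgroup acts trivially on $V$, so every fixed point set of $V^{\oplus j}\setminus\{0\}$ is either the full $(j-2)$-connected space or empty. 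Since the relative dimension of $(Y_{d,k}^{>1}\times I,\,Y_{d,k}^{>1}\times\partial I)$ equals $d(k-1)+1$, the hypothesis $j\geq d(k-1)+2$ keeps it below the target connectivity $j-1$, and every equivariant obstruction to the existence of $h_t$ vanishes.

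Given $h_t$, I set
\[
H_t(\lambda_1 w_1+\cdots+\lambda_k w_k) := \bigl(\lambda_1-\tfrac{1}{k},\ldots,\lambda_k-\tfrac{1}{k}\bigr)\oplus(\lambda_1\cdots\lambda_k)\,h_t(w_1,\ldots,w_k),
\]
with the convention that on $(X_{d,k}^{>1})'$ the second summand is $0$ (the factor $\lambda_1\cdots\lambda_k$ vanishes, and off the barycenter locus any point with all $\lambda_i>0$ automatically has $(w_1,\ldots,w_k)\in Y_{d,k}^{>1}$). Continuity at the interface follows from the boundedness of $h_t$ on the compact space $Y_{d,k}^{>1}$; $\Wk_k$-equivariance is inherited from $h_t$ together with the symmetry of $\lambda_1\cdots\lambda_k$; the endpoints are $H_0=\Psi_{\mathcal{M}}|_{X_{d,k}^{>1}}$ and $H_1=\Psi_{\mathcal{M'}}|_{X_{d,k}^{>1}}$; and the restriction to $(X_{d,k}^{>1})'$ is constant in $t$ and agrees with the formula prescribed in (iii). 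Non-vanishing of $H_t$ follows exactly as in (i): if some $\lambda_i\neq\tfrac{1}{k}$ the $W_k$-component is non-zero, and if $\lambda_i=\tfrac{1}{k}$ for every $i$ the second summand equals $k^{-k}h_t(w)\neq 0$. The main technical obstacle is the obstruction-theoretic step, in particular the isotropy analysis ensuring that the $\Wk_k$-representation $V^{\oplus j}$ has adequate fixed-point sets over $Y_{d,k}^{>1}$; once $h_t$ is in hand, the assembly of $H_t$ is routine.
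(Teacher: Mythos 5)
Your proof is correct, and it takes a genuinely different route from the paper. The paper does not prove Proposition~\ref{prop:homotopy} directly; it deduces it as the $\ell=0$ case of Proposition~\ref{prop:specialcase}, which is proved by working with the explicit cellular structure of $X_{d,k}^{>1}\times I$ and Bredon's equivariant obstruction theory with generic coefficient systems (the authors explicitly remark that tom Dieck's obstruction theory cannot be used there). In that argument the fixed-point subspaces $(W_k\oplus V^{\oplus j})^G$ vary with the cell, and the dimension count has to track the integer $z=\#\{r:i_r=d+2\}$ controlling both $\dim D^{\,s}_{i}(\sigma)$ and $\dim(W_k)^G$. Your argument bypasses all of this by the observation that the $W_k$-component of $\Psi_{\mathcal{M}}$ already controls everything on $X_{d,k}^{>1}$ off the barycentric locus $\lambda_1=\cdots=\lambda_k=\tfrac1k$; that locus coincides with $Y_{d,k}^{>1}$, on which the cycle analysis shows every isotropy element $((\beta_1,\dots,\beta_k),\tau)$ has $\sum\beta_i$ even, hence acts trivially on $V$. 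Consequently every relevant fixed-point set of the target is the full space $V^{\oplus j}\setminus\{0\}\simeq S^{j-1}$, and the obstruction argument collapses to the single inequality $d(k-1)+1\leq j-1$, which is exactly the hypothesis $j\geq d(k-1)+2$. The explicit assembly formula
\[
H_t(\lambda_1 w_1+\cdots+\lambda_k w_k)=\bigl(\lambda_1-\tfrac1k,\ldots,\lambda_k-\tfrac1k\bigr)\oplus(\lambda_1\cdots\lambda_k)\,h_t(w_1,\ldots,w_k)
\]
is correct: it is well defined (the factor $\lambda_1\cdots\lambda_k$ kills the ambiguity in $(w_1,\ldots,w_k)$ when some $\lambda_i=0$), continuous (by boundedness of $h_t$ on the compact $Y_{d,k}^{>1}\times I$), $\Wk_k$-equivariant, non-vanishing (by the case distinction $\lambda_i\neq\tfrac1k$ versus $\lambda_i\equiv\tfrac1k$), and has the prescribed endpoints and restriction to $(X_{d,k}^{>1})'$. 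What the paper's route buys is uniformity: the same cellular argument handles all $\ell\geq 0$, which is what Theorem~\ref{th : main 2}\eqref{th : main 2 : case 2} requires. Your route buys simplicity for $\ell=0$: you avoid the generic-coefficient-system machinery because the coefficient system over $Y_{d,k}^{>1}$ is constant, and the dimension count is immediate. Two small points you should make explicit in a final write-up: (a) $Y_{d,k}^{>1}$ must be given a $\Wk_k$-CW structure (it exists, e.g.\ by restricting the cell structure of \cite[Sec.\,3]{BlagojevicFrickHaaseZiegler2}, but it should be named); and (b) the stabilizer argument for the obstruction theory concerns cell stabilizers, not point stabilizers --- this is fine because in a $G$-CW complex a cell stabilizer fixes the cell pointwise and hence is contained in the stabilizer of each of its points, but that inclusion should be stated.
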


\medskip
The previous proposition is the special, easy to state, $\ell=0$ case of a stronger statement which works on invariant subcomplexes of $X_{d,k}$, and therefore on $X_{d,k}$ itself; see Proposition~\ref{prop:specialcase}.
Hence, we only prove the more general result.

\medskip
In the upcoming proposition we use the $\Wk_k$-CW structure on the join configuration space $X_{d,k}$ developed in \cite[Sec.\,3]{BlagojevicFrickHaaseZiegler2}, and reviewed in Section \ref{sec : appendix} of this paper.

\begin{proposition}\label{prop:specialcase}
Let $d\geq 2$, $j\geq 1$ and $k\geq 2$ be integers, and let $1 \leq \ell \leq d-1$ be an integer with $(d - \ell )(k-1) + 2 + \ell \leq j$.
Let $Z := \Wk_k\cdot \overline{\theta}$ denote the full $\Wk_k$-orbit of the closure of the cell $\theta := D_{1+\ell,\ldots,1+\ell,1}^{+,\ldots,+,+}(1,2,\ldots,k)$, and let $Z^{>1} := Z \cap X_{d,k}^{>1}$ and  $(Z^{>1})':=Z\cap (X_{d,k}^{>1})'$.
Furthermore, let $\mathcal{M}=(\mu_1,\dots,\mu_j)$ and $\mathcal{M'}=(\mu_1',\dots,\mu_j')$ be collections of  nice measures on $\R^d$ such that no $k$-element affine hyperplane arrangement parameterized by $Z^{>1}$ bisects them.
Then
\begin{compactenum}[\rm \quad (a)]
\item $0\notin \im \Psi_{\mathcal{M}}|_{Z^{>1}}$,
and
\item $\Psi_{\mathcal{M}}|_{Z^{>1}}$ and $\Psi_{\mathcal{M'}}|_{Z^{>1}}$ are $\Wk_k$-homotopic as maps
\[
Z^{>1}\longrightarrow (W_k\oplus V^{\oplus j}){\setminus}\{0\}
\]
which restrict on the subspace $(Z^{>1})'$ to the map given by 
\begin{equation}\label{eq : restricted map}
	\lambda_1w_1+\dots+\lambda_kw_k \longmapsto \big(\lambda_1-\tfrac{1}{k},\ldots,\lambda_k-\tfrac{1}{k}\big)\oplus 0,
\end{equation}
where $\lambda_1w_1+\dots+\lambda_kw_k\in (Z^{>1})'$ and $ \big(\lambda_1-\tfrac{1}{k},\ldots,\lambda_k-\tfrac{1}{k}\big)\oplus 0\in (W_k\oplus V^{\oplus j}){\setminus}\{0\}$.

\end{compactenum}
\end{proposition}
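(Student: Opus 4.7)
The plan is to handle part (i) by direct inspection of the defining formula for $\Psi_{\mathcal{M}}$, and part (ii) via relative equivariant obstruction theory applied to the $\Wk_k$-CW structure that $Z$ inherits from $X_{d,k}$.

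For (i), suppose $\Psi_{\mathcal{M}}(x)=0$ for some $x=\lambda_1 w_1+\cdots+\lambda_k w_k\in Z^{>1}$. The $W_k$-component $(\lambda_1-\tfrac{1}{k},\ldots,\lambda_k-\tfrac{1}{k})$ can vanish only when all $\lambda_i=\tfrac{1}{k}$, so $x\notin (Z^{>1})'$. Hence $x\in Z^{>1}\setminus (Z^{>1})'$, all $\lambda_i=\tfrac{1}{k}$, and in particular $w_s=\pm w_r$ for some $s<r$. Vanishing of the $V^{\oplus j}$-component then forces $\Phi_{\mathcal{M}}(w_1,\ldots,w_k)=0$; that is, the $k$-arrangement parametrized by $x\in Z^{>1}$ bisects $\mathcal{M}$, contradicting the hypothesis.

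For (ii), the factor $\lambda_1\cdots\lambda_k$ in \eqref{eq : test map def} makes $\Psi_{\mathcal{M}}$ and $\Psi_{\mathcal{M'}}$ agree on $(Z^{>1})'$ with the measure-independent map~\eqref{eq : restricted map}, whose image avoids $0$. Setting $Y:=(W_k\oplus V^{\oplus j})\setminus\{0\}$, which is $\Wk_k$-equivariantly homotopy equivalent to the unit sphere of dimension $k+j-2$, the goal reduces to producing a $\Wk_k$-homotopy rel $(Z^{>1})'$ from $\Psi_{\mathcal{M}}|_{Z^{>1}}$ to $\Psi_{\mathcal{M'}}|_{Z^{>1}}$. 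By relative equivariant obstruction theory~\cite{tom1987transformation} it suffices to verify, for every cell $\sigma\subseteq Z^{>1}\setminus (Z^{>1})'$ with isotropy $H_\sigma\leq\Wk_k$, the connectivity estimate $\dim\sigma\leq\conn Y^{H_\sigma}$.

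The crux is a face analysis of the generating cell $\theta=D_{1+\ell,\ldots,1+\ell,1}^{+,\ldots,+}(1,\ldots,k)$, of dimension $\dim\theta=kd+k-1-(k-1)\ell$. Its boundary strata meeting $X_{d,k}^{>1}$ arise from raising some index $i_s$ to $d+2$: for $s=1$ this forces $x_1=0$ and the stratum lies in $(Z^{>1})'$; for $s\in\{2,\ldots,k-1\}$ it produces the coincidence $x_s=x_{s-1}$ with codimension $d-\ell+1$ in $\theta$, yielding the top-dimensional cells of $Z^{>1}\setminus(Z^{>1})'$, of dimension $d(k-1)+k-2-(k-2)\ell$; for $s=k$ the codimension jumps to $d+1$. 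Each top-dimensional stratum is pointwise fixed by the transposition $(s-1,s)\in\Sym_k\subseteq\Wk_k$, which acts trivially on $V^{\oplus j}$ and has a $(k-2)$-dimensional fixed subspace in $W_k$; thus $Y^{\langle(s-1,s)\rangle}$ is homotopy equivalent to the $(k+j-3)$-sphere and hence $(k+j-4)$-connected. The hypothesis $(d-\ell)(k-1)+\ell+2\leq j$ rewrites exactly as $d(k-1)+k-2-(k-2)\ell\leq k+j-4$, which is the required estimate on the top strata; deeper strata pick up extra transpositions in the isotropy (each shrinking the fixed subspace of $W_k$ by one dimension) but simultaneously lose codimension $d-\ell+1$ (or more), and since $d\geq\ell+1$ the inequality propagates. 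The main obstacle is exactly this cellular bookkeeping: one must correctly identify which boundary faces of $\overline\theta$ lie in $(Z^{>1})'$ versus $Z^{>1}\setminus(Z^{>1})'$ and read off their isotropy groups, so that the tight margin in the hypothesis converts uniformly into the vanishing of all primary (and hence higher) equivariant obstructions.
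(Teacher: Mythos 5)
Your argument follows the paper's approach essentially step for step: part~(i) by direct inspection of the test map, and part~(ii) by relative equivariant obstruction theory on the pair $(Z^{>1}\times I,\, Z^{>1}\times\partial I\cup(Z^{>1})'\times I)$, reducing the vanishing of all obstructions to the inequality $\dim\sigma\leq\conn Y^{H_\sigma}$ cell by cell. Your dimension bookkeeping also matches: the paper parametrizes by $z:=\#\{q: i_q=d+2\}$ and shows $\dim D\leq (d+1)k-1-\ell(k-1)-z(d+1-\ell)$ against $\dim(W_k\oplus V^{\oplus j})^G=k-1-z+j$, and you correctly observe that the hypothesis $(d-\ell)(k-1)+\ell+2\leq j$ is exactly the $z=1$ case with strictly growing slack as $z$ increases since $d+1-\ell\geq 2$.

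The one issue to correct is the reference you invoke for the obstruction theory. You cite tom Dieck \cite[Sec.\,II.3]{tom1987transformation}, but the paper explicitly states that tom Dieck's framework \emph{cannot} be used for this particular extension problem and instead uses a slight extension of Bredon's equivariant obstruction theory as presented by May et al. The reason is that the $\Wk_k$-action on $Z^{>1}\setminus(Z^{>1})'$ is non-free by construction (the subspace $X_{d,k}^{>1}$ is precisely the locus of points with nontrivial stabilizer), so the obstructions live in Bredon cohomology groups $H^{r+1}_{\Wk_k}(K,L;\widetilde\omega_r)$ whose coefficient system is the generic functor $\Wk_k/H\mapsto\pi_r\big(((W_k\oplus V^{\oplus j})\setminus\{0\})^H\big)$ on the orbit category, together with a compatible choice of base points $y_0^G$ across conjugacy classes. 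Your stated criterion $\dim\sigma\leq\conn Y^{H_\sigma}$ is exactly the Bredon-theoretic vanishing condition, so the substance of the argument is sound; replace the citation and spell out the $0$-skeleton step (that $(K\setminus L)^G\neq\emptyset$ implies $Y^G\neq\emptyset$) and the proof aligns with the paper's.
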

\begin{proof}
The first statement  follows directly from the assumption that no $k$-element affine hyperplane arrangement parameterized by $Z^{>1}$ bisects $\mathcal{M}$.

\medskip
From the assumption on the collections of measures $\mathcal{M}$ and $\mathcal{M'}$ and the first part of the proposition we have that $0\notin \im \Psi_{\mathcal{M}}|_{Z^{>1}}$ and $0\notin \im \Psi_{\mathcal{M'}}|_{Z^{>1}}$.
Consequently the maps $\Psi_{\mathcal{M}}|_{Z^{>1}}$ and $\Psi_{\mathcal{M'}}|_{Z^{>1}}$ can be considered as $\Wk_k$-equivariant maps $Z^{>1}\longrightarrow (W_k\oplus V^{\oplus j}){\setminus}\{0\}$.
Furthermore, from the definition of the test map \eqref{eq : test map def} follows that the maps $\Psi_{\mathcal{M}}|_{(Z^{>1})'}=\Psi_{\mathcal{M'}}|_{(Z^{>1})'}$ coincide with the map \eqref{eq : restricted map}.

\medskip
In order to prove the second statement we need to construct an $\Wk_k$-equivariant homotopy
\[
F\colon Z^{>1}\times I \longrightarrow (W_k\oplus V^{\oplus j}){\setminus}\{0\}
\]
between the maps $\Psi_{\mathcal{M}}|_{Z^{>1}}$ and $\Psi_{\mathcal{M'}}|_{Z^{>1}}$.
Here $I$ denotes the unit interval $[0,1]$.
This will be done using a slight extension of the equivariant obstruction theory of Bredon \cite[Ch.\,II]{bredon2006equivariant} as presented in \cite[Ch.\,I.5]{MayEtAl1996} because the obstruction theory of tom Dieck \cite[Sec.\,II.3]{tom1987transformation} cannot be used in this situation.
Indeed, notice that no point in $Z^{>1}$ has a trivial stabilizer.

\medskip
For simplicity, we denote by
\[
K:=Z^{>1}\times I
\qquad\text{and}\qquad
L:=Z^{>1} \times \{0\}\,\cup\, Z^{>1}\times \{1\}\, \cup \, (Z^{>1})' \times I.
\]
Define $F_{-1}\colon L\longrightarrow (W_k\oplus V^{\oplus j}){\setminus}\{0\}$ by
\begin{eqnarray*}
	F_{-1}|_{Z^{>1}\times \{0\}}&:=&\Psi_{\mathcal{M}}|_{Z^{>1}},\\
	F_{-1}|_{Z^{>1}\times \{1\}}&:=&\Psi_{\mathcal{M'}}|_{Z^{>1}},\\
	F|_{(Z^{>1})'\times\{t\}}&=&\Psi_{\mathcal{M}}|_{(Z^{>1})'}=\Psi_{\mathcal{M'}}|_{(Z^{>1})'}, \quad\text{for all }t\in I.
\end{eqnarray*}
Our aim is to extend the $\Wk_k$-equivariant map $F_{-1}$ to an  $\Wk_k$-equivariant map $F\colon K \longrightarrow (W_k\oplus V^{\oplus j}){\setminus}\{0\}$ extending it one skeleton at a time.

\medskip
Since $(W_k\oplus V^{\oplus j}){\setminus}\{0\}$ is non-empty, and in addition, for every subgroup $G$ of $\Wk_k$, the following implication holds:
\[
 (K{\setminus}L)^G\neq\emptyset \Longrightarrow ((W_k\oplus V^{\oplus j}){\setminus}\{0\})^G\neq\emptyset
\]
we can extend $F_{-1}$ to the $0$-skeleton of $K$ obtaining an $\Wk_k$-equivariant map $F_0$.
Assume that we have defined a $\Wk_k$-equivariant map $F_r$ on the $r$-th skeleton of $K$.

\medskip
The obstructions for the extension of the map $F_r$ to the next skeleton live in the Bredon cohomology group
\[
	H_{\Wk_k}^{r+1}\big(K,L;\widetilde{\omega}_{r}((W_k\oplus V^{\oplus j}){\setminus}\{0\})\big),
\]
where $0\leq r\leq \dim K-1 = \dim Z^{>1}$.
Here  $\widetilde{\omega}_r((W_k\oplus V^{\oplus j}){\setminus}\{0\})\colon \mathcal{O}_{\Wk_k}\longrightarrow\mathrm{Ab}$ denotes the generic coefficient system.
That is a contravariant functor from the category of canonical objects $\mathcal{O}_{\Wk_k}$ of the group $\Wk_k$ associated to the pair $(K,L)$ into the category of Abelian groups given on objects by
\[
\widetilde{\omega}_r((W_k\oplus V^{\oplus j}){\setminus}\{0\})(\Wk_k/G)=\pi_r(((W_k\oplus V^{\oplus j}){\setminus}\{0\})^G,y_0^G),
\]
where $(K{\setminus}L)^G\neq\emptyset$.
Here, for every subgroup $G$ of $\Wk_k$ with the property that $((W_k\oplus V^{\oplus j}){\setminus}\{0\})^G \neq 0$, we chose a $G$-fixed base point $y_0^G$ in  $((W_k\oplus V^{\oplus j}){\setminus}\{0\})^G$ such that for every subgroup $H_1$ contained in the conjugacy class $gH_2g^{-1}$ of another subgroup $H_2$ holds $gy_0^{H_2} = y_0^{H_1}$.
Such a choice can be made beacuse $\Wk_k$ is finite.
For a detailed account of all relevant  notions see \cite[Ch.\,I.5]{MayEtAl1996} \cite[Ch.\,I.4]{bredon2006equivariant}.

\medskip
Let $D_{i_1,\dots,i_k}^{s_1,\dots,s_k}(\sigma)\times I$ be an arbitrary $r+1$ cell of $K\setminus L$. The cocycle corresponding to $D_{i_1,\dots,i_k}^{s_1,\dots,s_k}(\sigma)\times I$ will have coefficients in $\pi_r(((W_k\oplus V^{\oplus j}){\setminus}\{0\})^G,y_0^G)$, where $G \subseteq \Wk_k$ is the stabilizer group of the cell $D_{i_1,\dots,i_k}^{s_1,\dots,s_k}(\sigma)\times I$.

\medskip
As $D_{i_1,\dots,i_k}^{s_1,\dots,s_k}(\sigma)\times I$ lies in $K \setminus L$, the first  $1\leq i_1 \leq d+1$ and second $k-1$ of the positive integers $i_1,\dots,i_k$ are less than or equal to $1+\ell$.
Furthermore, as $G$ is the stabilizing group of the cell $D_{i_1,\dots,i_k}^{s_1,\dots,s_k}(\sigma)$ and $i_1 \leq d+1$, $G$ is not a subgroup of the defining subgroup $(\Z/2)^k$ of the group $\Wk_k=(\Z/2)^k\rtimes\Sym_k$.
Let $(\beta_1, \dots, \beta_k) \rtimes \tau \in \Wk_k$ be an element that fixes the cell $D_{i_1,\dots,i_k}^{s_1,\dots,s_k}(\sigma)$, that is
\[
(\beta_1, \dots, \beta_k) \rtimes \tau \cdot D_{i_1,\dots,i_k}^{s_1,\dots,s_k}(\sigma) = D_{i_1,\dots,i_k}^{(-1)^{\beta_1}s_1,\dots,(-1)^{\beta_k}s_k}(\tau\sigma)=D_{i_1,\dots,i_k}^{s_1,\dots,s_k}(\sigma).
\]
Consequently, we have that
\begin{compactitem}[\quad ---]
\item $(-1)^{\beta_q}s_q = s_{\tau^{-1}(q)}$ for all $1 \leq q \leq k$, and
\item $i_r = d+2$ for each $\tau(q) < r \leq q$ resp.\ $q < r \leq \tau(q)$ for all $1 \leq q \leq k$ with $\tau(q)\neq q$.
\end{compactitem}
In particular,  $(-1)^{\beta_1} \cdots (-1)^{\beta_k} = 1$ and so $(V^{\oplus j})^G = V^{\oplus j}$.

\medskip
Next, the dimension of the cell $D_{i_1,\dots,i_k}^{s_1,\dots,s_k}(\sigma)$ can be estimated as follows.
Let us first introduce  $z=z_{(i_1,\dots,i_k)}:=\#\{r : 1 \leq r \leq k \text{ and } i_r = d+2\}$.
Notice that $1\leq z\leq k-1$.
Then
\begin{multline*}
	r=\dim D_{i_1,\dots,i_k}^{s_1,\dots,s_k}(\sigma) = (d+1)k - 1 - \sum_{q = 1}^k (i_q - 1)\\ \leq (d+1)k - 1 - \ell(k-1) - z(d+1 - \ell).
\end{multline*}
On the other hand,
\[
\dim (W_k\oplus V^{\oplus j})^G  = k-1-z+j.
\]
From the assumptions that $1 \leq \ell \leq d-1$ and $(d - \ell )(k-1) + 2 + \ell \leq j$ we get that
\begin{equation}
	\label{eq : inequality - 01}
	r+1=\dim (D_{i_1,\dots,i_k}^{s_1,\dots,s_k}(\sigma)\times I ) \leq \dim S((W_k\oplus V^{\oplus j})^G).
\end{equation}
This conclusion follows from a direct verification of the inequality
\[
(d+1)k - \ell(k-1) - z(d+1 - \ell) \leq k-2-z+j,
\]
or more precisely the inequality
\[
(d - \ell )(k-1) + 2 + \ell \geq (d+1)k - \ell(k-1) - z(d+1 - \ell)-k+2+z.
\]

\medskip
Now, the relevant generic coefficient system $\widetilde{\omega}_r((W_k\oplus V^{\oplus j}){\setminus}\{0\})(\Wk_k/G)$ vanishes.
Indeed, from inequality \eqref{eq : inequality - 01} it follows that 
\begin{multline*}
\widetilde{\omega}_r((W_k\oplus V^{\oplus j}){\setminus}\{0\})(\Wk_k/G)=
\pi_r(((W_k\oplus V^{\oplus j}){\setminus}\{0\})^G,y_0^G)\\
\cong \pi_r(S((W_k\oplus V^{\oplus j}){\setminus}\{0\})^G,\tfrac{1}{\|y_0^G\|}y_0^G)\overset{\eqref{eq : inequality - 01}}{=}0.
\end{multline*}
Thus, the Bredon cohomology group $H_{\Wk_k}^{r+1}(K,L;\widetilde{\omega}_{r}((W_k\oplus V^{\oplus j}){\setminus}\{0\}))$ also vanishes.
Consequently all obstructions, in all dimensions $r+1 \leq  \dim Z^{>1}$, vanish.
Thus,  the $\Wk_k$-equivariant map $F_{-1}\colon L\longrightarrow (W_k\oplus V^{\oplus j}){\setminus}\{0\}$ extends to an $\Wk_k$-equivariant map $F\colon Z^{>1}\times I \longrightarrow (W_k\oplus V^{\oplus j}){\setminus}\{0\}$, that is to an $\Wk_k$-homotopy between the maps $\Psi_{\mathcal{M}}|_{Z^{>1}}$ and $\Psi_{\mathcal{M'}}|_{Z^{>1}}$.
\end{proof}
\begin{remark}
In general, for a finite group $G$ the category of canonical object  $\mathcal{O}_G$ of $G$ consists of all sets of left cosets $G/H$ as objects, where $H$ is a subgroup of $G$, and all $G$-equivariant maps $G/H_1\longrightarrow G/H_2$ between them as morphisms.
Here the action of $G$ on the objects is assumed to be induced by the left translations; see \cite[Ch.\,I.3]{bredon2006equivariant}.  
A generic coefficient system of group $G$ is any contravariant functor $\omega\colon \mathcal{O}_G\longrightarrow\mathrm{Ab}$ from the category of canonical object into the category of abelian groups; for more details consult \cite[Ch.\,I.4]{bredon2006equivariant}.
\end{remark}

\medskip
Now we combine the criterion stated in Proposition~\ref{prop:CS/TM}\,(\ref{item:existence-of-maps}) and the observations from Proposition~\ref{prop:homotopy} and Proposition~\ref{prop:specialcase} into a theorem.
In the following, $\nu$ denotes the radial $\Wk_k$-equivariant deformation retraction
\[
\nu\colon (W_k\oplus V^{\oplus j}){\setminus}\{0\}\longrightarrow S(W_k\oplus V^{\oplus j}).
\]

\begin{theorem}~
\label{th : CS/TM}
\begin{compactenum}[\rm \quad (a)]
\item \label {th : CS/TM - main case}
Let $d\geq 1$, $j\geq 1$ and $k\geq 2$ be integers with $d(k-1) + 2 \leq j$, and let $\mathcal{M}$ be any collection of $j$ nice measures on~$\R^d$ such that no non-essential $k$-element affine hyperplane arrangement bisects them.
If there is no $\Wk_k$-equivariant map
	\[
		X_{d,k} \longrightarrow S(W_k\oplus V^{\oplus j})
	\]
	whose restriction on $X_{d,k}^{>1}$ is $\Wk_k$-homotopic to
	$\nu\circ\Psi_{\mathcal{M}}|_{X_{d,k}^{>1}}$, then $(d,j,k)\in\Lambda$.
\item \label{th : CS/TM - ell > 0}
Let $d\geq 1$, $j\geq 1$ and $k\geq 2$ be integers, and let $0 \leq \ell \leq d-1$ be an integer such that  $(d - \ell )(k-1) + 2 + \ell \leq j$.
Set $Z := \Wk_k\cdot \overline{\theta}$ to be the $\Wk_k$-orbit of the closure of the cell $\theta := D_{1+\ell,\ldots,1+\ell,1}^{+,\ldots,+}(1,2,\ldots,k)$, and $Z^{>1} := Z \cap X_{d,k}^{>1}$.
If, for a collection $\mathcal{M}$ of $j$ nice measures on~$\R^d$ such that $0\notin \im \Psi_{\mathcal{M}}|_{Z^{>1}}$,
there is no $\Wk_k$-equivariant map
	\[
		Z \longrightarrow S(W_k\oplus V^{\oplus j})
	\]
	whose restriction on $Z^{>1}$ is $\Wk_k$-homotopic to
	$\nu\circ\Psi_{\mathcal{M}}|_{Z^{>1}}$, then $(d,j,k)\in\Lambda$.
\end{compactenum}
\end{theorem}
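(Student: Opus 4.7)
I will argue by contrapositive: assume there is some $d'\ge d$ with $(d',j,k)\notin\Lambda$, and produce the forbidden extension. Fix a collection $\mathcal{M}^*=(\mu_1^*,\dots,\mu_j^*)$ of $j$ proper measures on $\R^{d'}$ admitting no bisecting $k$-element arrangement. Then by Proposition~\ref{prop:CS/TM}\eqref{item:test-map-zeros} we have $0\notin\im\Psi_{\mathcal{M}^*}$, so $g:=\nu\circ\Psi_{\mathcal{M}^*}$ is a $\Wk_k$-equivariant map $X_{d',k}\longrightarrow S(W_k\oplus V^{\oplus j})$. The embedding $\R^d\hookrightarrow\R^{d'}$ onto the first $d$ coordinates induces an equivariant inclusion $S^d\hookrightarrow S^{d'}$ of the associated affine-hyperplane sphere models, and hence a $\Wk_k$-equivariant inclusion $X_{d,k}\hookrightarrow X_{d',k}$. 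I take $f$ to be the restriction of $g$ to $X_{d,k}$. The plan is to show that $f|_{X_{d,k}^{>1}}$ is $\Wk_k$-homotopic to $\nu\circ\Psi_{\mathcal{M}}|_{X_{d,k}^{>1}}$, directly contradicting the hypothesis of part~\eqref{th : CS/TM - main case}.

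To identify the homotopy class of this restriction I would exhibit $\Psi_{\mathcal{M}^*}|_{X_{d,k}}$ as the test map of an auxiliary collection on $\R^d$. Let $\pi\colon\R^{d'}\to\R^d$ be the projection onto the first $d$ coordinates (matched with the embedding above) and set $\tilde{\mathcal{M}}^*:=\pi_*\mathcal{M}^*=(\pi_*\mu_1^*,\dots,\pi_*\mu_j^*)$. Since the preimage under $\pi$ of an affine hyperplane in $\R^d$ is an affine hyperplane in $\R^{d'}$, and $\mathcal{M}^*$ is proper, the push-forward $\tilde{\mathcal{M}}^*$ is a collection of proper measures on $\R^d$. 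For $w_1,\dots,w_k\in S^d\subseteq S^{d'}$ the oriented hyperplane $H_{w_i}^{d'}\subseteq\R^{d'}$ is exactly the cylinder $\pi^{-1}(H_{w_i}^d)$, so evaluation of $\Phi_{\mathcal{M}^*}$ at such a $k$-arrangement agrees with evaluation of $\Phi_{\tilde{\mathcal{M}}^*}$ at the induced arrangement in $\R^d$; consequently $\Psi_{\mathcal{M}^*}|_{X_{d,k}}=\Psi_{\tilde{\mathcal{M}}^*}$. Any $k$-element arrangement on $\R^d$ bisecting $\tilde{\mathcal{M}}^*$ would lift via $\pi^{-1}$ to one bisecting $\mathcal{M}^*$ on $\R^{d'}$, contradicting the choice of $\mathcal{M}^*$; in particular $\tilde{\mathcal{M}}^*$ admits no non-essential bisecting arrangement.

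Now $\mathcal{M}$ and $\tilde{\mathcal{M}}^*$ are two collections of $j$ proper measures on $\R^d$ both satisfying the hypotheses of Proposition~\ref{prop:homotopy}, since $j\ge d(k-1)+2$ is precisely the dimensional condition in part~\eqref{th : CS/TM - main case}. Part~\eqref{item:psi-homotopy} of that proposition supplies a $\Wk_k$-homotopy $\Psi_{\mathcal{M}}|_{X_{d,k}^{>1}}\simeq\Psi_{\tilde{\mathcal{M}}^*}|_{X_{d,k}^{>1}}$ inside $(W_k\oplus V^{\oplus j})\setminus\{0\}$; post-composing with $\nu$ yields the desired $\Wk_k$-homotopy between $\nu\circ\Psi_{\mathcal{M}}|_{X_{d,k}^{>1}}$ and $f|_{X_{d,k}^{>1}}$ in $S(W_k\oplus V^{\oplus j})$. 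Thus $f$ is the forbidden extension, establishing part~\eqref{th : CS/TM - main case}. Part~\eqref{th : CS/TM - ell > 0} follows by the same scheme after replacing $X_{d,k}$ with the $\Wk_k$-invariant subcomplex $Z$ and invoking Proposition~\ref{prop:specialcase} in place of Proposition~\ref{prop:homotopy}; the assumption $0\notin\im\Psi_{\mathcal{M}}|_{Z^{>1}}$ plays the role of the non-bisection condition on $\mathcal{M}$, and the cylinder identification gives $\Psi_{\mathcal{M}^*}|_{Z}=\Psi_{\tilde{\mathcal{M}}^*}|_{Z}$ unchanged. The main obstacle is purely the bookkeeping for the push-forward/cylinder identification — verifying properness of $\pi_*\mathcal{M}^*$ and checking that the restricted test map on $X_{d,k}\subseteq X_{d',k}$ really coincides with $\Psi_{\tilde{\mathcal{M}}^*}$; once these are in hand, the result is a direct assembly of the preceding propositions.
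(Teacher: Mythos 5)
Your argument is correct and is essentially the combination the paper has in mind: the paper states this theorem with only the remark that it ``combines'' Proposition~\ref{prop:CS/TM}\,(\ref{item:existence-of-maps}) with Propositions~\ref{prop:homotopy} and~\ref{prop:specialcase}, and your contrapositive assembly, using the pushforward $\tilde{\mathcal{M}}^* = \pi_*\mathcal{M}^*$ together with the cylinder identification to reduce a counterexample in $\R^{d'}$ to one in $\R^d$ and then invoking the homotopy propositions, is exactly the right way to carry that out. One small streamlining you could make: there is no need to first build $g$ on $X_{d',k}$ and then restrict to $X_{d,k}$; you can pass directly to $\tilde{\mathcal{M}}^*$ on $\R^d$, observe that it admits no bisecting $k$-arrangement (hence $0\notin\im\Psi_{\tilde{\mathcal{M}}^*}$ by Proposition~\ref{prop:CS/TM}\,(\ref{item:test-map-zeros})), and apply Proposition~\ref{prop:homotopy} (resp.\ Proposition~\ref{prop:specialcase}) to $\mathcal{M}$ and $\tilde{\mathcal{M}}^*$, which gives the forbidden map without the detour through $X_{d',k}$.
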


\section{Proof of Theorem \ref{th : main 2.5}}
\label{sec : proof of the second result}
From this point on we fix an $\Wk_k$-CW structure on the sphere $X_{d,k}$ to be the one introduced and described in \cite[Sec.\,3]{BlagojevicFrickHaaseZiegler2} and reviewed in the appendix of this paper, Section \ref{sec : appendix}.

\medskip
Let $d\geq 1$, $j\geq 1$ and $k\geq 2$ be integers.
We want to prove that if one of the conditions (1)-(2) of Theorem \ref{th : main 2.5} is satisfied, then for every collection $\mathcal{M}$ of $j$ nice measures in $\R^d$ there exists a $k$-element affine hyperplane arrangement in $\R^d$ that bisects each of the measures.
For this, according to Theorem~\ref{th : CS/TM}\eqref{th : CS/TM - main case}, in case $j = dk$ it suffices to prove that there is no $\Wk_k$-equivariant map
\[
		X_{d,k} \longrightarrow S(W_k\oplus V^{\oplus j}),
\]
whose restriction on $X_{d,k}^{>1}$ is $\Wk_k$-homotopic to $\nu\circ\Psi_{\mathcal{M}_0}|_{X_{d,k}^{>1}}$, where $\mathcal{M}_0$ is a fixed collection of $j$ nice measures on $\R^d$ such that no non-essential $k$-element affine hyperplane arrangement bisects them.

\medskip
Alternatively, according to Theorem~\ref{th : CS/TM}\eqref{th : CS/TM - ell > 0}, we may consider $Z := \Wk_k\cdot\overline{\theta}$ to be the full $\Wk_k$-orbit of the closure of the cell $\theta := D_{1+\ell,\ldots,1+\ell,1}^{+,\ldots,+,+}(1,2,\ldots,k)$ for some $0 \leq \ell \leq d$ such that $(d - \ell)(k-1) + 2 + \ell \leq j$.
If $j = k(d - \ell) + \ell$ this is indeed satisfied.
As before, set  $Z^{>1} := Z \cap X_{d,k}^{>1}$.
Then it suffices to prove that there is no $\Wk_k$-equivariant map
\[
		Z \longrightarrow S(W_k\oplus V^{\oplus j}),
\]
whose restriction on $Z^{>1}$ is $\Wk_k$-homotopic to $\nu\circ\Psi_{\mathcal{M}_0}|_{Z^{>1}}$, where $\mathcal{M}_0$ is a fixed collection of $j$ nice measures on $\R^d$ such that no $k$-element affine hyperplane arrangement parameterized by $Z^{>1}$ bisects them.

\medskip
Therefore, our proof of Theorem \ref{th : main 2.5} follows directly from the following two propositions.
The first proposition gives divisibility criterions for the nonexistence of an $\Wk_k$-equivariant map $Z \longrightarrow S(W_k\oplus V^{\oplus j})$ with required properties.

\begin{proposition}
\label{th : main 3}
Let $d\geq 1$, $j\geq 1$ and $k\geq 2$ be integers.
\begin{compactenum}[\rm\quad (a)]
\item \label{th : main 3 : case 1}
If $dk=j$, and $\frac{1}{k!}{ dk \choose {d,\dots,d}}$ is odd, and $Z := X_{d,k}$, $Z^{>1}: = X_{d,k}^{>1}$, or
\item \label{th : main 3 : case 2}
if there exists an integer $\ell$ such that  $1 \leq \ell \leq  d-1$,  $(d-\ell)k+\ell=j$ and ${(d-\ell)k+\ell \choose d}\tfrac{1}{(k-1)!}{(d-\ell)(k-1) \choose {d-\ell,\dots,d-\ell}}$ is odd, and $Z := \Wk_k\cdot\theta$, $Z^{>1} := Z \cap X_{d,k}^{>1}$ where $\theta:=D_{1+\ell,\ldots,1+\ell,1}^{+,\ldots,+,+}(1,2,\ldots,k)$,
\end{compactenum}
then there is no $\Wk_k$-equivariant map
\begin{equation}
\label{eq : eq-map-2}
Z \longrightarrow  S(W_k\oplus V^{\oplus j}),
\end{equation}
whose restriction to $Z^{>1}$ is $\Wk_k$-homotopic to
	$\nu\circ\Psi_{\mathcal{M}_0}|_{Z^{>1}}$, where $\mathcal{M}_0$ is some fixed collection of $j$ nice measures on $\R^d$ such that that no $k$-element affine hyperplane arrangement parameterized by $Z^{>1}$ bisects them.

\end{proposition}

\medskip
The proof of Proposition \ref{th : main 3}\eqref{th : main 3 : case 1} will actually give us more, since by construction of $Z \subseteq X_{d,k}$  the existence of the $\Wk_k$-equivariant map \eqref{eq : eq-map-2} depends only on the primary obstruction.
In the case when $Z=X_{d,k}$ we have the following equivalence.

\begin{corollary}
	\label{cor : existence if and only if}
	Let $d\geq 1$, $j\geq 1$ and $k\geq 2$ be integers, and let $dk=j$.
	Then $\frac{1}{k!}{ dk \choose {d,\dots,d}}$ is even if and only if there exists an $\Wk_k$-equivariant map
	\[X_{d,k} \longrightarrow  S(W_k\oplus V^{\oplus j})\]
	whose restriction on $X_{d,k}^{>1}$ is $\Wk_k$-homotopic to
	$\nu\circ\Psi_{\mathcal{M}_0}|_{X_{d,k}^{>1}}$, where $\mathcal{M}_0$ is a certain fixed collection of $j$ nice measures on $\R^d$ such that no non-essential $k$-element affine hyperplane arrangement bisects them.
    \end{corollary}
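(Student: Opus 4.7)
The plan is to handle the two implications asymmetrically. One direction --- if $\tfrac{1}{k!}\binom{dk}{d,\dots,d}$ is odd, then no such equivariant map exists --- is precisely Theorem~\ref{th : main 3}\eqref{th : main 3 : case 1}, so I only need to treat the converse: assuming the multinomial is even, build the required equivariant extension. The key strategic observation is that in the setting $Z=X_{d,k}$, $j=dk$ the dimensions of source and target in the relative equivariant obstruction theory leave only one non-trivial obstruction level, and this single obstruction is precisely the class that the proof of Theorem~\ref{th : main 3}\eqref{th : main 3 : case 1} is already computing.

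Following the setup of Proposition~\ref{prop:specialcase}, I would apply relative equivariant obstruction theory to the extension problem
\[
X_{d,k}^{>1}\ \xrightarrow{\ \nu\circ\Psi_{\mathcal{M}_0}|_{X_{d,k}^{>1}}\ }\ S(W_k\oplus V^{\oplus j}),
\]
whose obstructions to an $\Wk_k$-equivariant extension over all of $X_{d,k}$ live in
\[
H^{r+1}_{\Wk_k}\bigl(X_{d,k},\,X_{d,k}^{>1};\,\widetilde\omega_r(S(W_k\oplus V^{\oplus j}))\bigr),\qquad 0\le r\le (d+1)k-2.
\]
Note that $\dim S(W_k\oplus V^{\oplus j})=(k-1)+j-1=(d+1)k-2$, one less than $\dim X_{d,k}$. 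Every open cell of the pair $(X_{d,k},X_{d,k}^{>1})$ has trivial $\Wk_k$-stabilizer, since by definition $X_{d,k}^{>1}$ is the non-free locus, so on any such cell the coefficient $\pi_r(S(W_k\oplus V^{\oplus j}))=\pi_r(S^{(d+1)k-2})$ vanishes whenever $r<(d+1)k-2$. Hence all sub-top obstructions are automatically zero and only the primary obstruction in
\[
H^{(d+1)k-1}_{\Wk_k}\bigl(X_{d,k},\,X_{d,k}^{>1};\,\Z\bigr)
\]
can survive.

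To finish, I would identify this primary obstruction class with the mod-$2$ reduction of $\tfrac{1}{k!}\binom{dk}{d,\dots,d}$. The top cells of $X_{d,k}\setminus X_{d,k}^{>1}$ form exactly one free $\Wk_k$-orbit of size $2^k k!$, so the class is determined by a single cellular evaluation; this evaluation is carried out in the proof of Theorem~\ref{th : main 3}\eqref{th : main 3 : case 1} with a convenient test measure $\mathcal{M}_0$ (for instance pairwise disjoint intervals on a moment curve), and it outputs $\tfrac{1}{k!}\binom{dk}{d,\dots,d}$ modulo $2$. Granting that computation, an even multinomial forces the obstruction to vanish, the equivariant extension therefore exists and literally extends $\nu\circ\Psi_{\mathcal{M}_0}|_{X_{d,k}^{>1}}$, giving the corollary.

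The main obstacle is the degree computation that identifies the primary obstruction with the multinomial coefficient modulo $2$. It is a concrete cellular calculation on a single top cell, but one has to bookkeep carefully the $\Wk_k$-signs on both source and target and the orientations coming from the moment-curve configuration. Once that identification is in hand, the corollary is a free consequence of the single-obstruction-dimension count above.
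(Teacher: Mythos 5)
Your proposal is correct and follows essentially the same route as the paper. Both you and the paper reduce the corollary to the single equivalence ``$[\oo(\nu\circ\Psi_{\mathcal{M}_0}|_{\sk_N(X_{d,k})})]=0 \Leftrightarrow \frac{1}{k!}\binom{dk}{d,\dots,d}$ is even,'' using that $\dim X_{d,k}=\dim S(W_k\oplus V^{\oplus j})+1$ so the primary obstruction is the only one, and then evaluating the obstruction cocycle on the single free $\Wk_k$-orbit of top cells against moment-curve measures.

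One small point worth making explicit, which both you and the paper leave somewhat implicit: the statement ``even multinomial $\Rightarrow$ obstruction class vanishes'' requires knowing that $2\Z$ is actually contained in the image of the equivariant coboundary $\delta$, not merely that the coboundary takes values in $2\Z$. Since $\mathcal{C}^{N+1}_{\Wk_k}(X_{d,k},X_{d,k}^{>1};\Z)\cong\Z$ (one free top-cell orbit), this amounts to checking that at least one of the boundary coefficients $1+(-1)^{d+j}\varepsilon_1$ or $1+(-1)^{d+1}\tau_{i-1,i}$ evaluates to $2$ on the relevant $N$-cell generator (they cannot all be zero because the conditions $d$ odd, $k$ even and $d$ even are mutually exclusive), and that those boundary $N$-cells lie outside $X_{d,k}^{>1}$ so the relative $N$-cochain can be chosen freely. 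With that verified, $H^{N+1}_{\Wk_k}(X_{d,k},X_{d,k}^{>1};\Z)\cong\Z/2$ and the obstruction is genuinely the multinomial modulo $2$; your argument then closes.
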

\medskip
The second proposition shows when the divisibility criterions of Proposition \ref{th : main 3} are satisfied.
More precisely, it shows that the assumptions of Theorem \ref{th : main 2.5} are equivalent to the divisibility criterions in  Proposition \ref{th : main 3}.
The first case of the proposition is the content of \cite[Lem.\,5]{HubardKarasev}.
In the second case we restrict $\ell$ to $2\leq 2 \ell \leq d-1$ as the case $2\ell \geq d$ does not yield any new bound.

\begin{proposition}\label{lem : number theory}
	Let $d\geq 1$, $j\geq 1$ and $k\geq 2$ be integers.
	\begin{compactenum}[\rm\quad (a)]
	\item Let $dk=j$.
	Then $\frac{1}{k!}{ dk \choose {d,\dots,d}}$ is odd if and only if $d=2^a$ for some integer $a\geq 0$.
	\item Let $(d-\ell)k+\ell=j$ and $2\leq 2 \ell \leq d-1$.
	Then ${(d-\ell)k+\ell \choose d}\cdot\frac{1}{(k-1)!}{(d-\ell)(k-1) \choose {d-\ell,\dots,d-\ell}}$ is odd if and only if $k$ is odd and
	 $d=2^a+\ell$ for some integer $a\geq 1$.
	\end{compactenum}
\end{proposition}

\medskip
The first part of the proposition is the content of \cite[Lem.\,5]{HubardKarasev}.
In the second part of the proposition we restrict to such $\ell$ where $2\leq 2 \ell \leq d-1$ as the case $2\ell \geq d$ does not yield any new bounds.

\medskip
Now, using Theorem~\ref{th : CS/TM}, Propositions \ref{th : main 3} and Proposition \ref{lem : number theory} we give a proof of the main result of our paper.

\begin{proof}[Proof of Theorem \ref{th : main 2.5}]
From Theorem~\ref{th : CS/TM}, the claim of Theorem \ref{th : main 2.5} holds if we are able to prove that there is no $\Wk_k$-equivariant map $Z \longrightarrow S(W_k\oplus V^{\oplus j})$,
whose restriction on $Z^{>1}$ is $\Wk_k$-homotopic to $\nu\circ\Psi_{\mathcal{M}_0}|_{Z^{>1}}$, where $\mathcal{M}_0$ is a fixed collection of $j$ nice measures on $\R^d$ such that no $k$-element affine hyperplane arrangement parameterized by $Z^{>1}$ bisects them.
Here  $Z := \Wk_k\cdot\theta$, $Z^{>1} := Z \cap X_{d,k}^{>1}$ and $\theta:=D_{1+\ell,\ldots,1+\ell,1}^{+,\ldots,+,+}(1,2,\ldots,k)$.

An $\Wk_k$-equivariant map $Z \longrightarrow S(W_k\oplus V^{\oplus j})$ with described properties, according to Proposition  \ref{th : main 3}, does not exist if either,
\begin{compactitem}[\quad---]
\item  $dk=j$ and $\frac{1}{k!}{ dk \choose {d,\dots,d}}$ is odd, or
\item there exists an integer $\ell$ such that  $1 \leq \ell \leq  d-1$,  $(d-\ell)k+\ell=j$ and ${(d-\ell)k+\ell \choose d}\tfrac{1}{(k-1)!}{(d-\ell)(k-1) \choose {d-\ell,\dots,d-\ell}}$ is odd.
\end{compactitem}
Next, Proposition \ref{lem : number theory} implies that when:
\begin{compactitem}[\quad---]
\item $d=2^a$ for some integer $a\geq 0$ and $dk=j$, the number $\frac{1}{k!}{ dk \choose {d,\dots,d}}$ is odd, and for
\item $(d-\ell)k+\ell=j$,  $2\leq 2 \ell \leq d-1$, $d=2^a+\ell$ for some integer $a\geq 1$, the number ${(d-\ell)k+\ell \choose d}\cdot\frac{1}{(k-1)!}{(d-\ell)(k-1) \choose {d-\ell,\dots,d-\ell}}$ is odd.
\end{compactitem}
Thus we concluded the proof of Theorem \ref{th : main 2.5}.
\end{proof}

\medskip
In the next two parts of this section we verify both ingredients of the proof of Theorem \ref{th : main 2.5}, that is, we prove Proposition \ref{th : main 3} and Proposition \ref{lem : number theory}.

%
%
\subsection{Proof of Proposition \ref{th : main 3} and Corollary \ref{cor : existence if and only if}}
In order to answer the question about the existence of the equivariant map \eqref{eq : eq-map-2} we use the relative equivariant obstruction theory of tom Dieck on an $\Wk_k$-invariant subcomplex $Z$ of the sphere $X_{d,k}$ with respect to the group of signed permutations $\Wk_k$.
For that we follow \cite[Sec.\,2.6 and Sec.\,4]{BlagojevicFrickHaaseZiegler2} and use the $\Wk_k$-CW structure on $(X_{d,k},X_{d,k}^{>1})$ introduced in \cite[Sec.\,3]{BlagojevicFrickHaaseZiegler2} and presented in Section \ref{sec : appendix}.
A concise presentation of the relevant equivariant obstruction theory can be found in \cite[Sec.\,II.3]{tom1987transformation}.

\medskip
The study of the existence of the equivariant map \eqref{eq : eq-map-2} is done in three separate steps.
First, in Section \ref{subsec : step 1}, we check all the relevant assumptions needed for an application of  relative obstruction theory.
Furthermore, we identify what is the first obstruction which needs to be calculated and what is the ambient group where this obstruction lives.
In the second step, Section \ref{subsec : step 2}, we explain how the obstruction cocycle will be computed using the the binomial moment curve \eqref{eq : moment curve} and give the formula \eqref{eq:obstruction_cocycle} for the evaluation of the cocycle on a cell of a corresponding $\Wk_k$-CW complex.
The third step, proof of the (non-)vanishing of the cohomology class of the obstruction cocycle, is presented in Section \ref{subsec : step 3-1} for the case when the primary obstruction is the only obstruction for the existence of the equivariant map \eqref{eq : eq-map-2}, and in Section \ref{subsec : step 3-2} for the case when there are more obstructions.

\subsubsection{Setting up the obstruction theory}
\label{subsec : step 1}
We consider the problem of the existence of an $\Wk_k$-equivariant map
\begin{equation}
	\label{eq-map-2}
	 Z \longrightarrow  S(W_k\oplus V^{\oplus j}),
\end{equation}
whose restriction to the subcomplex $Z^{>1} := Z \cap X_{d,k}^{>1}$ is $\Wk_k$-homotopic to the map $\nu\circ\Psi_{\mathcal{M}_0}|_{Z^{>1}}$, where $\mathcal{M}_0$ is a some fixed collection of $j$ nice measures on $\R^d$ such that no $k$-element affine hyperplane arrangement parameterized by $Z^{>1}$ bisects them.

\medskip
Let us denote the dimensions of $Z$ and of the sphere $S(W_k\oplus V^{\oplus j})$ as follows
\[
M:= \dim Z
\qquad
\text{and}
\qquad
N:=\dim (S(W_k\oplus V^{\oplus j}))=j+k-2.
\]
In the case \ref{th : main 3}\eqref{th : main 3 : case 1} we have
\begin{multline*}
M	:= \dim Z = \dim X_{d,k} =   \dim D_{1,\ldots,1,1}^{+,\ldots,+,+}(1,2,\ldots,k) =\\(d+1)k - 1 =  j + k -1  = N + 1
\end{multline*}
because  $j = dk$.
In the case \ref{th : main 3}\eqref{th : main 3 : case 2} we have $j = (d-\ell)k + \ell$ and consequently
\begin{multline*}
	M  :=\dim Z = \dim D_{1+\ell,\ldots,1+\ell,1}^{+,\ldots,+}(1,2,\ldots,k)  = \\ (d+1)k -1 - \ell(k-1)  = j + k -1
	   \ = N  + 1.
\end{multline*}

\medskip
In order to apply relative equivariant obstruction theory, as presented by tom Dieck in \cite[Sec.\,II.3]{tom1987transformation}, the following requirements need to be satisfied:
\begin{compactitem}[---]

\item $Z$ is equipped with the structure of a relative $\Wk_k$-CW complex $(Z,Z^{>1})$.
This is obtained from the relative $\Wk_k$-CW structure of $(X_{d,k},X_{d,k}^{>1})$, as demonstrated in \cite[Sec.\,3]{BlagojevicFrickHaaseZiegler2}; see Section \ref{sec : appendix}.

\item The $N$-sphere $S(W_k\oplus V^{\oplus j})$ is path connected and $N$-simple; for a definition consult for example \cite[Def.\,5.5.7]{arkowitz2011}.
Indeed, we have that $N\geq 1$, and consequently $\pi_N(S(W_k\oplus V^{\oplus j}))\cong\Z$ is abelian for $N=1$, while $\pi_N(S(W_k\oplus V^{\oplus j}))=0$ when $N\geq 2$.

\item The collection of nice measures $\mathcal{M}_0$ induces the $\Wk_k$-equivariant map
\[
h\colon Z^{>1}\longrightarrow S(W_k\oplus V^{\oplus j}), \qquad h:=\nu\circ\Psi_{\mathcal{M}_0}|_{Z^{>1}},
\]
which we want to extend.
\end{compactitem}

\medskip
The $N$-sphere $S(W_k\oplus V^{\oplus j})$ is $(N-1)$-connected.
Hence, the fixed $\Wk_k$-equivariant map $h\colon Z^{>1}\longrightarrow S(W_k\oplus V^{\oplus j})$ can be extended to an $\Wk_k$-equivariant map
\[
g\colon \sk_N(Z)\cup Z^{>1} \longrightarrow S(W_k\oplus V^{\oplus j}),
\]
where $\sk_N(Z)$ denotes the $N$th skeleton of $Z$.
Since we have
that $M= N + 1$, we now try to extend the map $g$ to the next, final, $(N+1)$th skeleton of $Z$.
The extension of the map $g$ is obstructed by the equivariant cocycle
\[
\oo(g)\in \mathcal{C}_{\Wk_k}^{N+1}\big(Z,Z^{>1} \,; \,\pi_{N}(S(W_k\oplus V^{\oplus j}))\big),
\]
while the extension of the map $g|_{\sk_{N-1}(Z)\cup Z^{>1}}$ is obstructed by the  cohomology class
\[
[\oo(g)]\in \HH_{\Wk_k}^{N+1}\big(Z,Z^{>1} \,; \,\pi_{N}(S(W_k\oplus V^{\oplus j}))\big).
\]
The cocycle $\oo(g)$ and the cohomology class $[\oo(g)]$ are called the obstruction cocycle and respectively the obstruction element  associated to the map $g$.
Now, the central theorem \cite[Thm.\,II.3.10]{tom1987transformation} tells us that:
\begin{compactitem}[\quad ---]

\item The $\Wk_k$-equivariant map $g\colon \sk_N(Z)\cup Z^{>1} \longrightarrow S(W_k\oplus V^{\oplus j})$ extends to the next skeleton  $\sk_{N+1}(Z)\cup Z^{>1} = Z$ if and only if the obstruction cocycle vanishes, that is $\oo(g)=0$.

\item The restriction $\Wk_k$-equivariant map
\[g|_{\sk_{N-1}(Z)\cup Z^{>1}}\colon \sk_{N-1}(Z)\cup Z^{>1} \longrightarrow S(W_k\oplus V^{\oplus j})\]
extends to $\sk_{N+1}(Z)\cup Z^{>1} = Z$ if and only if the obstruction element vanishes, that is $[\oo(g)]=0$.

\end{compactitem}
Furthermore, since
\[
\dim ( \sk_N(Z)\cup Z^{>1})-\dim (S(W_k\oplus V^{\oplus j}))=1
\]
and
\[
\conn(S(W_k\oplus V^{\oplus j}))=N-1
\]
according to \cite[Prop.\,II.3.15]{tom1987transformation} any two $\Wk_k$-equivariant maps
\[
g',g''\colon \sk_N(Z)\cup Z^{>1} \longrightarrow S(W_k\oplus V^{\oplus j})
\]
define cohomologous obstruction cocycles $\oo(g')$ and $\oo(g'')$, or in other words the induced obstruction elements coincide: $[\oo(g')]=[\oo(g'')]$.
Thus, {\em it is enough to compute the obstruction element $[\oo(\nu\circ\Psi_{\mathcal{M}_0}|_{\sk_N(Z)\cup Z^{>1}})]$ associated to the map $\nu\circ\Psi_{\mathcal{M}_0}|_{\sk_N(Z)\cup Z^{>1}}$ induced by a fixed collection of $j$ nice measures $\mathcal{M}_0$, which have the property that no $k$-element affine hyperplane arrangement parameterized by the subcomplex $Z^{>1}$ bisects them, or in other words for which $0\notin \im (\Psi_{\mathcal{M}_0} |_{\sk_N(Z)})$}.

\subsubsection{Evaluation of the obstruction cocycle $\oo(\nu\circ\Psi_{\mathcal{M}_0}|_{\sk_N(Z)\cup Z^{>1}})$}
\label{subsec : step 2}
With the fixed cellular structure we assume that an orientation on each cell of the $\Wk_k$-CW complex $Z$ is chosen.
Furthermore, we choose an orientation on the sphere $S(W_k\oplus V^{\oplus j})$.

\medskip
Let $\theta$ be an arbitrary $(N+1)$-dimensional cell of $Z$, $f_{\theta}\colon E^{N+1}\longrightarrow Z$ be the associated characteristic map, and let $e_{\theta}$ denote the corresponding basis element in the cellular chain group $C_{N+1}(Z,Z^{>1})$.
Here $E^{N+1}$ denotes the $(N+1)$-dimensional ball.
Then by the geometric definition of the obstruction cocycle associated to the map $\nu\circ\Psi_{\mathcal{M}_0}|_{\sk_N(Z)\cup Z^{>1}}$ we have that
\[
\oo(\nu\circ\Psi_{\mathcal{M}_0}|_{\sk_N(Z)\cup Z^{>1}}) (e_{\theta}) = [\nu\circ\Psi_{\mathcal{M}_0}\circ f_{\theta}|_{\partial\theta}]\in \pi_{N}(S(W_k\oplus V^{\oplus j})).
\]
For more details of the geometric definition of the obstruction cocycle consult for example \cite[Sec.\,7.3]{davis2001}.
The spheres $\partial\theta$ and $S(W_k\oplus V^{\oplus j})$ have the same dimension and therefore the homotopy class $[\nu\circ\Psi_{\mathcal{M}_0}\circ f_{\theta}|_{\partial\theta}]$ is completely determined by the degree of the map
\[
\xymatrix@1{
 \partial\theta\ar[rr]^-{f_{\theta}|_{\partial\theta}} \  & & \  \sk_N(Z)\cup Z^{>1}\ar[rrr]^{\nu\circ\Psi_{\mathcal{M}_0}|_{\sk_N(Z)\cup Z^{>1}}} \  & & & \ S(W_k\oplus V^{\oplus j}).
}
\]
Recall that the orientations on $\partial\theta$ and $S(W_k\oplus V^{\oplus j})$ are already fixed and so the degree is well defined.
For simplicity, let $\kappa:= \nu\circ\Psi_{\mathcal{M}_0}|_{\sk_N(Z)\cup Z^{>1}}\circ f_{\theta}|_{\partial\theta}$.

\medskip
Now we want to evaluate degree of the map $\kappa\colon \partial\theta\longrightarrow  S(W_k\oplus V^{\oplus j})$.
For that we fix $\mathcal{M}_0$ to be the collection of nice measures $(\mu_1,\ldots,\mu_j)$ where $\mu_{r}$ is the measure concentrated on the segment $I_r:=\gamma([t_{r1},t_{r2}])$ of the binomial moment curve in~$\R^d$
\begin{equation}
	\label{eq : moment curve}
	\gamma(t)=\big(\tbinom{t}{1},\tbinom{t}{2},\tbinom{t}{3},\ldots, \tbinom{t}{d} \big)^T,
\end{equation}
where
\[
\ell< t_{11}<t_{12}<t_{21}<t_{22}<\cdots<t_{j1}<t_{j2}.
\]
(Here $A^T$ stands for the transposition of the matrix $A$.)  
In the case $Z = X_{d,k}$ we take $\ell = 0$.
The intervals $(I_1,\ldots,I_j)$ determined by $t_{r1}<t_{r2}$ can be chosen in such a way that $0\notin \im (\Psi_{\mathcal{M}_0}|_{\sk_N(Z)\cup Z^{>1}})$.
This requirement will be directly verified for every concrete situation in the next section.

\medskip
The binomial moment curve is used because the cell~$\theta=D^{+,+,+,\ldots,+}_{\ell+1,1,1,\ldots,1}(1,2,3,\ldots,k)$ parametrizes all arrangements $\mathcal{H}=(H_1,\ldots,H_k)$ of $k$ linear hyperplanes in $\R^{d+1}$, where the order and orientation are fixed appropriately, such that
\begin{compactitem}[\quad--- ]
\item $\{(1,\gamma(0)),\ldots, (1,\gamma(\ell-1))\}\subseteq H_1$,
\item $(1,\gamma(\ell))\notin H_1$, 
\item $(1,\gamma(0))\notin H_2,\ldots, (1,\gamma(0))\notin H_k$, and
\item  $H_2,\dots,H_k$ have unit normal vectors  $x_2,\dots,x_k$ with distinct (positive) first coordinates, that is,
       \[
       \Big|\big\{ \langle x_2, (1,\gamma(0))\rangle, \langle x_3,(1,\gamma(0))\rangle,\ldots,\langle x_k,(1,\gamma(0))\rangle  \big\}\Big|=k-1.
       \]
\end{compactitem}
For the complete account of these facts see \cite[Sec.\,3.4]{BlagojevicFrickHaaseZiegler2}.

\medskip
Next, consider the commutative diagram:
\begin{equation*}
\xymatrix{
\partial\theta\ar[r]_-{f_{\theta}|_{\partial\theta}}\ar[d]\ar@/^1.5pc/[rrrrr]^-{\kappa} & \sk_N(Z)\cup Z^{>1}\ar[rrr]_{\Psi_{\mathcal{M}_0}|_{\sk_N(Z)\cup Z^{>1}}} \ar[d] & & & (W_k\oplus V^{\oplus j}){\setminus}\{0\} \ar[d]\ar[r]_{\nu} & S(W_k\oplus V^{\oplus j})\\
\theta\ar[r]^-{f_{\theta}}\ar@/_1.5pc/[rrrr]^-{\widehat{\kappa}}  & Z\ar[rrr]^{\Psi_{\mathcal{M}_0}|_{Z}}  & & & W_k\oplus V^{\oplus j}.
}
\end{equation*}
Here the vertical arrows are inclusions, and the composition of the lower horizontal maps is denoted by $\widehat{\kappa}:=\Psi_{\mathcal{M}_0}|_{Z}\circ f_{\theta}$.
Now, let $E_{\varepsilon}(0)$ denote the ball with center $0$ in the $\Wk_k$-representation $W_k\oplus V^{\oplus j}$ of, a sufficiently small, radius~$\varepsilon>0$.
Furthermore, let $\widetilde{\theta}:=\theta{\setminus}\widehat{\kappa}^{-1}(E_{\varepsilon}(0))$.
Because of the equality of dimensions $\dim (\theta)=\dim (W_k\oplus V^{\oplus j})$ we can assume that the set of zeros $\widehat{\kappa}^{-1}(0)\subseteq\relint(\theta)$ is finite, say of cardinality $z\geq0$.
Again finiteness of set of zeroes of the function $\widehat{\kappa}$ is checked in every concrete case independently.

The function $\widehat{\kappa}$ is a restriction of the function $\Psi_{\mathcal{M}_0}$ and therefore {\em the points in $\widehat{\kappa}^{-1}(0)$ correspond to the $k$-element affine hyperplane arrangements in $\relint\theta$ which bisect $\mathcal{M}_0$}.
From the fact that:
\begin{compactitem}[\rm \quad ---]
\item the measures in $\mathcal{M}_0$ are disjoint intervals on a moment curve \eqref{eq : moment curve}, and that
\item each hyperplane  cuts the moment curve in at most $d$ distinct points,
\end{compactitem}
it follows that each zero in $\widehat{\kappa}^{-1}(0)$ is isolated and transversal.
The boundary of $\widetilde{\theta}$ is composed of the boundary of the cell $\partial\theta$ and in addition $z$ disjoint copies of $N$-spheres $S_1,\ldots,S_z$, one for each zero of $\widehat{\kappa}$, which are contained in the relative interior of the cell $\theta$.
Therefore, the fundamental class of the sphere $\partial\theta$ is equal to the sum (up to a sign) of fundamental classes $\sum [S_i]$ in $H_{N}(\widetilde{\theta};\Z)$.
Keep in mind that the fundamental class of $\partial\theta$ is determined by the cell orientation inherited from the $\Wk_k$-CW structure on $Z$, which we already fixed.
Now we define orientation on the spheres $S_1,\ldots,S_z$ in such a way that equality $[\partial\theta]=\sum [S_i]$ is valid.
Consequently,
\[
\sum (\nu\circ\widehat{\kappa}|_{\widetilde\theta})_{*}([S_i]) = (\nu\circ\widehat{\kappa}|_{\widetilde\theta})_{*}([\partial\theta])=\kappa_{*}([\partial\theta])=\deg(\kappa)\cdot[S(W_k\oplus V^{\oplus j})].
\]
Rearranging the left hand side of the equality using the family of continuous maps $\nu\circ\widehat{\kappa}|_{S_i}\colon S_i\longrightarrow S(W_k\oplus V^{\oplus j})$ we get that
\[
\sum (\nu\circ\widehat{\kappa}|_{\widetilde\theta})_{*}([S_i])=
\sum (\nu\circ\widehat{\kappa}|_{S_i})_{*}([S_i])=
\Big(\sum\deg(\nu\circ\widehat{\kappa}|_{S_i})\Big)\cdot[S(W_k\oplus V^{\oplus j})].
\]
Hence,
\[
\deg(\kappa) = \sum\deg(\nu\circ\widehat{\kappa}|_{S_i}).
\]
where the sum ranges over all $k$-element affine hyperplane arrangements in $\relint(\theta)$ which bisect $\mathcal{M}_0$.
Thus we have obtained that
\begin{align}
\label{eq:obstruction_cocycle}
\oo(\nu\circ\Psi_{\mathcal{M}_0}|_{\sk_{N}(Z)\cup Z^{>1}})(e_{\theta} ) &= [\nu\circ\Psi_{\mathcal{M}_0}\circ f_{\theta}|_{\partial\theta}]\\
&=[\kappa]\nonumber \\
&=\deg(\kappa)\cdot\zeta\nonumber \\
&= \sum\deg(\nu\circ\widehat{\kappa}|_{S_i})\cdot\zeta.\nonumber
\end{align}
Here $\zeta\in \pi_{N}(S(W_k\oplus V^{\oplus j}))\cong H_{N}(S(W_k\oplus V^{\oplus j});\Z)\cong\Z$ is the generator determined by the already fixed orientation on the sphere.
The sum \eqref{eq:obstruction_cocycle} ranges over all $k$-element affine hyperplane arrangements in $\relint(\theta)$ that bisect $\mathcal{M}_0$.

\subsubsection{Evaluation of the obstruction element in the case $Z = X_{d,k}$}
\label{subsec : step 3-1}
In this section we complete the proof of Proposition \ref{th : main 3}\eqref{th : main 3 : case 1} and Corollary \ref{cor : existence if and only if}.

\medskip
Recall that $M = N+1$ and thus
$[\oo(\nu\circ\Psi_{\mathcal{M}_0}|_{\sk_N(X_{d,k})\cup X_{d,k}^{>1}})]$
is the primary obstruction element and also the only obstruction for the existence of the map \eqref{eq-map-2}.
In particular, this means that an
$\Wk_k$-equivariant map $X_{d,k} \longrightarrow  S(W_k\oplus V^{\oplus j})$,
whose restriction on $X_{d,k}^{>1}$ is $\Wk_k$-homotopic to
	$\nu\circ\Psi_{\mathcal{M}_0}|_{X_{d,k}^{>1}}$, exists if and only if
$[\oo(\nu\circ\Psi_{\mathcal{M}_0}|_{\sk_N(X_{d,k})\cup X_{d,k}^{>1}})]=0$.
We will prove that
\begin{equation}
	\label{eq : obstruction-01}
	[\oo(\nu\circ\Psi_{\mathcal{M}_0}|_{\sk_N(X_{d,k})\cup X_{d,k}^{>1}})]=0
	\qquad\Longleftrightarrow\qquad
	 \frac{1}{k!}{ dk \choose {d,\dots,d}}
	 \text{ is even.}
\end{equation}
This would conclude the proof of Proposition \ref{th : main 3}\eqref{th : main 3 : case 1} and Corollary \ref{cor : existence if and only if}.

\medskip
We have to evaluate the cocycle
\[
\oo:=\oo(\nu\circ\Psi_{\mathcal{M}_0}|_{\sk_N(X_{d,k})_\cup X_{d,k}^{>1}})\in \mathcal{C}_{\Wk_k}^{N+1}\big(X_{d,k},X_{d,k}^{>1} \,; \,\pi_{N}(S(W_k\oplus V^{\oplus j}))\big),
\]
on the $M(=N+1)$-cells of the $M$-dimensional sphere $X_{d,k}$.
From \cite[Thm.\,3.11]{BlagojevicFrickHaaseZiegler2} we know that $X_{d,k}$ has a unique full $\Wk_k$-orbit of maximal dimensional cells represented by the cell
\[
\theta:= D_{1,\ldots,1}^{+,\ldots,+}(1,2,\ldots,k).
\]
Furthermore, from Theorem \ref{th : CW-model} or \cite[Ex.\,3.12]{BlagojevicFrickHaaseZiegler2}, we have that $\theta$ is given by the inequalities $x_{1,1}<x_{1,2}< \dots < x_{1,k}$.
Thus, having in mind that $\oo$ is an $\Wk_k$-equivariant cocycle, it suffices to evaluate $\oo(e_{\theta})$.

\medskip
Consider a collection of $j$ ordered disjoint intervals $\mathcal{M}_0=(I_1, \ldots, I_j)$ along the moment curve $\gamma$, defined in \eqref{eq : moment curve}, with midpoints $(x_1,\ldots,x_j)$ respectively.
Then, according to \eqref{eq:obstruction_cocycle}, we have that
\begin{equation}
	\label{ob-01}
	\oo(e_{\theta}) = \sum\deg(\nu\circ\widehat{\kappa}|_{S_i})\cdot\zeta =\big(\sum\pm 1\big)\cdot\zeta = :a\cdot\zeta,
\end{equation}
where the sum ranges over all $k$-element affine hyperplane arrangements in $\relint(\theta)$ which bisect $\mathcal{M}_0$.
We have that:
\begin{compactitem}[\quad ---]
	\item $dk=j$,
	\item any $k$-element affine hyperplane arrangement in $\R^d$ has at most $dk$ intersection points with the moment curve $\gamma$,
	\item for bisection of a collection of $j$ intervals on $\gamma$ one needs at least $j$ points, and
	\item each $k$-element affine hyperplane arrangement that bisects $\mathcal{M}_0$ is completely determined (up to an orientation of hyperplanes) by a partition of the set of midpoints  $\{x_1,\ldots,x_j\}$ of the intervals $(I_1, \ldots, I_j)$ into $k$ subset of cardinality $d$ each, where each of these subset uniquely determines a hyperplane of the $k$-element affine hyperplane arrangement.
\end{compactitem}
Thus the number of $k$-element affine hyperplane arrangements that bisect $\mathcal{M}_0$ is
${dk \choose {d,\dots,d}}2^k$.
Using slight perturbations of the intervals $(I_1, \ldots, I_j)$ along the curve $\gamma$, without changing their order, we can assume that all the bisecting $k$-element affine hyperplane arrangements are contained in $\bigcup_{g\in\Wk_k}g\cdot\relint(\theta)$.
Thus, the number of $k$-element affine hyperplane arrangements that bisect $\mathcal{M}_0$ and are contained in $\relint(\theta)$ is $\frac{1}{k!}{ dk \choose {d,\dots,d}}$.
This means that the integer $a$, defined by the equation \eqref{ob-01}, has the property that
\[
a \equiv \frac{1}{k!}{ dk \choose {d,\dots,d}} \mod 2.
\]

\medskip
In the final step let us assume that $[\oo]=0$, meaning that the cocycle $\oo$ is also a coboundary.
Thus there exists a cochain
\[
\bb\in \mathcal{C}_{\Wk_k}^{N}\big(X_{d,k},X_{d,k}^{>1} \,; \,\pi_{N}(S(W_k\oplus V^{\oplus j}))\big)
\]
such that $\oo=\delta\bb$, where $\delta$ denotes the coboundary operator.
From \eqref{rel - 1} or \cite[Eq.\,(11)]{BlagojevicFrickHaaseZiegler2} we have that
\begin{equation}
    \label{rel - 1a}
      \partial e_{\theta}=(1+(-1)^d\varepsilon_1)\cdot e_{\gamma_1}+\sum_{i=2}^{k}
      (1+(-1)^d\tau_{i-1,i})\cdot e_{\gamma_{2i-1}},
      \end{equation}
where the cells $\gamma_1,\ldots,\gamma_{2k}$ are described in Example \ref{example:boundary-1}, or  in \cite[p.\,755]{BlagojevicFrickHaaseZiegler2}, and $\tau_{i-1,i}\in\Sym_k\subseteq\Wk_k$ denotes the transposition that interchanges $i-1$ and $i$.
Thus, $\oo=\delta\bb$ and \eqref{rel - 1a} imply that
\begin{align*}
a\cdot \zeta &= \oo(e_{\theta})=\delta\bb(e_{\theta}) =\bb(\partial e_{\theta})\\
&=(1+(-1)^d\varepsilon_1)\cdot \bb(e_{\gamma_1})+\sum_{i=2}^{k}
      (1+(-1)^d\tau_{i-1,i})\cdot \bb(e_{\gamma_{2i-1}}) \\
&=  (1+(-1)^{d+j})\cdot \bb(e_{\gamma_1})+\sum_{i=2}^{k}
      (1+(-1)^{d+1})\cdot \bb(e_{\gamma_{2i-1}})\\
&= 2b\cdot\zeta,
\end{align*}
for some integer $b$.
In this calculation we use the fact that $\bb$ is an equivariant cochain, and that $\varepsilon_1$ and $\tau_{i-1,i}$ act on $V^{\oplus j}$ respectively by multiplication with $(-1)^j$ and trivially. 
Whereas, $\varepsilon_1$ and $\tau_{i-1,i}$ act on $W_k$ trivially and by multiplication with $(-1)$ respectively.
Hence,
\[
[\oo]=0
\quad\Longleftrightarrow\quad
a\equiv 0\mod 2
\quad\Longleftrightarrow\quad
\frac{1}{k!}{ dk \choose {d,\dots,d}}\equiv  0 \mod 2.
\]
We verified \eqref{eq : obstruction-01}, and concluded a proof of Proposition \ref{th : main 3}\eqref{th : main 3 : case 1} and Corollary \ref{cor : existence if and only if}.

\subsubsection{Evaluation of the obstruction element in the case $Z = \Wk_k\cdot\overline{\theta}$ where $\theta=D_{1+\ell,\ldots,1+\ell,1}^{+,\ldots,+,+}(1,2,\ldots,k)$}
\label{subsec : step 3-2}
In this section we complete the proof of  Proposition \ref{th : main 3}\eqref{th : main 3 : case 2}.

\medskip
As before we have that $\dim Z=M = N+1$ and consequently the obstruction element
$[\oo(\nu\circ\Psi_{\mathcal{M}_0}|_{\sk_N(Z)\cup Z^{>1}})]$
is the primary obstruction element and the only obstruction to the existence of an $\Wk_k$-equivariant map \eqref{eq-map-2}.
However, in this case, it is not the only obstruction for the existence of an $\Wk_k$-equivariant map $X_{d,k}\longrightarrow S(W_k\oplus V^{\oplus j})$.

\medskip
Thus, we prove the implication
\begin{multline}
\label{eq : obstruction-012}
{(d-\ell)k+\ell \choose d}\cdot\frac{1}{(k-1)!}{(d-\ell)(k-1) \choose {d-\ell,\dots,d-\ell}} \equiv 1 \mod 2  \\
\Longrightarrow \quad
[\oo(\nu\circ\Psi_{\mathcal{M}_0}|_{\sk_{N}(Z)\cup Z^{>1}})]\neq 0.
\end{multline}
In this way we would prove Proposition \ref{th : main 3}\eqref{th : main 3 : case 2} and complete the proof of the proposition.

\medskip
For that we evaluate the obstruction cocycle
\[
\oo:=\oo(\nu\circ\Psi_{\mathcal{M}_0}|_{\sk_{N}(Z)\cup Z^{>1}})\in \mathcal{C}_{\Wk_k}^{N+1}\big(Z,Z^{>1} \,; \,\pi_{N}(S(W_k\oplus V^{\oplus j}))\big).
\]
on the $M(=N+1)$-cells of $Z$.
By construction $Z$ is given as the $\Wk_k$-orbit of the cell
\[
    \theta:=D_{1+\ell,\ldots,1+\ell,1}^{+,\ldots,+,+}(1,2,\ldots,k).
\]
and the boundary of its generator $e_{\theta}$ can be written as
\begin{align}
\label{eq : boundary}
\partial e_{\theta}&=(1+(-1)^{d-1}\varepsilon_1)e_{\nu_1}+\sum_{i=2}^{k-1}(1+(-1)^{d-1}\tau_{i-1,i})e_{\nu_{2i-1}}+ \\
&\qquad\qquad\qquad\qquad\qquad\qquad\qquad\qquad\qquad\qquad\qquad \sum_{i=w}^{k} (1+ (-1)^{d} \varepsilon_{i})e_{\mu_{2i}}, \nonumber
\end{align}
where $w={\small \begin{cases}1, & \ell = 1\\ k, & \ell \neq 1\end{cases}}$.
For more details about the cell $\theta$ consult Example \ref{ex:cell_example} or \cite[pp.\,751,\,754]{BlagojevicFrickHaaseZiegler2}.

\medskip
Now we will evaluate $\oo(\theta)$.
Since $\oo$ is an $\Wk_k$-equivariant cocycle, in this way we will evaluate the cocycle $\oo$ on all the cells in the orbit of $\theta$.

\medskip
Consider the moment curve $\gamma$ defined in \eqref{eq : moment curve}.
We fix a collection of $j$ ordered disjoint intervals $\mathcal{M}_0=(I_1, \ldots, I_j)$ on $\gamma$ defined by $I_1=\gamma([t_{11},t_{12}]),\dots, I_j=\gamma([t_{j1},t_{j2}])$ where
\[
\ell< t_{11}<t_{12}<t_{21}<t_{22}<\cdots<t_{j1}<t_{j2}.
\]
Then, as in \cite[Lem.\,3.13]{BlagojevicFrickHaaseZiegler2}, we have that the cell $\theta$ parameterizes all $k$-element affine hyperplane arrangements, where the order and orientation are fixed appropriately, such that the first $k-1$ hyperplanes contain the points $s_1:=\gamma(0),s_2:=\gamma(1),\dots,s_\ell:=\gamma(\ell-1)$.
Thus again, according to \eqref{eq:obstruction_cocycle}, we have that
\begin{equation}
	\label{ob-02}
	\oo(e_{\theta}) = \sum\deg(\nu\circ\widehat{\kappa}|_{S_i})\cdot\zeta =\big(\sum\pm 1\big)\cdot\zeta = :a\cdot\zeta,
\end{equation}
where the sum ranges over all $k$-element affine hyperplane arrangements in $\relint(\theta)$ which bisect $\mathcal{M}_0$.
We have that:
\begin{compactitem}[\quad ---]
	\item $(d-\ell)k+\ell=j$,
	\item any $k$-element affine hyperplane arrangement in $\R^d$ has at most $dk$ intersection points with the moment curve $\gamma$,
	\item $\theta$ parameterizes all $k$-element affine hyperplane arrangements such that first $k-1$ hyperplane contain the points $s_1,\dots,s_\ell$, meaning that $(k-1)\ell$ intersection points out of $dk$ cannot be used for interval partitioning,
	\item for bisection of collection of $j$ intervals on $\gamma$ one needs at least $j=dk-\ell(k-1)$ points, and thus
	\item each $k$-element affine hyperplane arrangement from $\theta$ that bisects $\mathcal{M}_0$ is completely determined (up to an orientation of hyperplanes) by a partition of the set of midpoints  $\{x_1,\ldots,x_j\}$ of the intervals $(I_1, \ldots, I_j)$ into $k-1$ subset of cardinality $d-1$ each and one subset of cardinality $D$, where each of these subsets uniquely determines a hyperplane of the $k$-element affine hyperplane arrangement.
\end{compactitem}
Consequently, the number of $k$-element affine hyperplane arrangements from $Z$ which bisect $\mathcal{M}_0$ is
${(d-\ell)k+\ell \choose d}\cdot\frac{1}{(k-1)!}{(d-\ell)(k-1) \choose {d-\ell,\dots,d-\ell}}\cdot  2^{k-1}$.
Again, using slight perturbations of the intervals $(I_1, \ldots, I_j)$ along the curve $\gamma$, without changing their order, we can assume that all the bisecting $k$-element affine hyperplane arrangements are contained in $\bigcup_{g\in\Wk_k}g\cdot\relint(\theta)$.
Thus, the number of $k$-element affine hyperplane arrangements that bisect $\mathcal{M}_0$ and are contained in $\relint(\theta)$ is ${(d-\ell)k+\ell \choose d}\cdot\frac{1}{(k-1)!}{(d-\ell)(k-1) \choose {d-\ell,\dots,d-\ell}}$.
This means that the integer $a$, defined by equation \eqref{ob-02}, has the property
\[
a \equiv {(d-\ell)k+\ell \choose d}\cdot\frac{1}{(k-1)!}{(d-\ell)(k-1) \choose {d-\ell,\dots,d-\ell}} \mod 2.
\]
\medskip
Next, assume that $[\oo]=0$, i.e., the cocycle $\oo$ is also a coboundary.
Hence there is an $N$-cochain
$
\bb\in \mathcal{C}_{\Wk_k}^{N}\big(Z,Z^{>1} \,; \,\pi_{N}(S(W_k\oplus V^{\oplus j}))\big)
$
such that $\oo=\delta\bb$, where $\delta$, as before, is the coboundary operator.
Consequently, \eqref{eq : boundary} implies that
\begin{align*}
a\cdot \zeta &= \oo(e_{\theta})=\delta\bb(e_{\theta}) =\bb(\partial e_{\theta})\\
&=(1+(-1)^{d-1}\varepsilon_1)\cdot \bb(e_{\nu_1})+ \sum_{i=2}^{k-1}
      (1+(-1)^{d-1}\tau_{i-1,i})\cdot \bb(e_{\nu_{2i-1}}) +\\
& \qquad \qquad\qquad\qquad\qquad\qquad\qquad\qquad\qquad\qquad\qquad  \sum_{i=w}^{k} (1+ (-1)^{d} \varepsilon_{i})\bb(e_{\mu_{2i}})\\
&=  (1+(-1)^{d-1+j})\cdot \bb(e_{\nu_1})+\sum_{i=2}^{k}
      (1+(-1)^{d})\cdot \bb(e_{\nu_{2i-1}})+\\
& \qquad \qquad\qquad\qquad\qquad\qquad\qquad\qquad\qquad\qquad\qquad      \sum_{i=w}^{k} (1+ (-1)^{d-j})\bb(e_{\mu_{2i}})\\
&= 2b\cdot\zeta,
\end{align*}
for some integer $b$, where $w={\small \begin{cases}1, & \ell = 1\\ k, &  \ell \neq 1 \end{cases}}$.
Here we use the fact that $\bb$ is an equivariant cochain, and that $\varepsilon_1$ and permutations $\tau_{i-1,i}$ act on $V^{\oplus j}$
respectively by multiplication with $(-1)^j$ and trivially. 
They act on $W_k$ trivially and by multiplication with $(-1)$ respectively.
Therefore, if
\[
a\equiv {(d-\ell)k+\ell \choose d}\cdot\frac{1}{(k-1)!}{(d-\ell)(k-1) \choose {d-\ell,\dots,d-\ell}} \not\equiv 0\mod 2,
\]
then $[\oo]\neq 0$, and we concluded the proof of \eqref{eq : obstruction-012} and
Proposition \ref{th : main 3}\eqref{th : main 3 : case 2}.
%
\subsection{Proof of Proposition \ref{lem : number theory}}\label{sec : proof of lemma}

For the proof of the proposition we use the following classical facts going back to Legendre \cite{Legendre1899}, for a modern reference see for example \cite[Thm.\,2.6.4]{Moll2012}.
Let $p$ be a prime, $k\geq 1$ be an integer, and let $E_p(k):=\max \{i\in\N\cup\{0\} : p^i \mid k!\}$.
There is a unique $p$-adic presentation of the integer $k$ in the form $k=a_0+a_1p+\dots+a_mp^m$, where $0\leq a_i\leq p-1$ for all $0\leq i\leq m$.
Let $\alpha_p(k):=a_0+a_1+\dots+a_m$ denote the sum of coefficients in the $p$-adic expansion of $k$.
Then
\begin{equation}\label{eq : Ep}
E_p(k)=\sum_{j\geq 1}\Big\lfloor\frac{k}{p^j}\Big\rfloor = \frac{k-\alpha_p(k)}{p-1}.
\end{equation}
Furthermore, if $k_1,\dots,k_t$ are non-negative integers such that $k=k_1+\dots+k_t$, then
\begin{equation}\label{eq : divisibility}
{k \choose {k_1,\dots,k_t}}\equiv 0 \mod p^{r}
\qquad\Longleftrightarrow\qquad
E_p(k)-\sum_{i=1}^{t}E_p(k_i)\geq r.
\end{equation}

\medskip
\noindent{\bf (i)}
In our proof we assume that $p=2$, and we also use the inequalities
\begin{equation}\label{eq : alpha inequality}
	\alpha_2(a+b)\leq \alpha_2(a)+\alpha(b)
	\qquad\text{and}\qquad
	\alpha_2(ab)\leq \alpha_2(a)\alpha_2(b),
\end{equation}
whcih hold for arbitrary integers $a\geq 1$ and $b\geq 1$.
Consider the following sequence of equivalences
\begin{eqnarray*}
	\frac{1}{k!}{ dk \choose {d,\dots,d}}=\frac{(dk)!}{k!d!\cdots d!} \qquad\text{is odd}
	&\Longleftrightarrow &
	E_2(dk)=E_2(k)+kE_2(d)\\
	&\Longleftrightarrow &
	E_2(k) = kE_2(d) - E_2(dk)\\
	&\overset{\eqref{eq : Ep}}{\Longleftrightarrow} &
	k-\alpha_2(k)=k\alpha_2(d)-\alpha_2(dk).
\end{eqnarray*}

\medskip
Now, if we assume that $\frac{1}{k!}{ dk \choose {d,\dots,d}}$ is odd, then according of the previous equivalences and \eqref{eq : alpha inequality} we have that
\[
k-\alpha_2(k)=k\alpha_2(d)-\alpha_2(dk)\geq k\alpha_2(d)-\alpha_2(d)\alpha_2(k) = (k-\alpha_2(k))\alpha_2(d).
\]
Since $k\geq 2$ we have that $k-\alpha_2(k)\geq 0$ and consequently $\alpha_2(d)\leq 1$.
Thus $d$ must be a power of two.

\smallskip
On the other hand, let us assume that $d$ is a power of two, or in other words $\alpha_2(d)=1$.
Since in this case $\alpha_2(dk)=\alpha_2(k)$ we get the equality
\[
k-\alpha_2(k)=k\alpha_2(d)-\alpha_2(dk).
\]
Hence, the sequence of equivalences we deduced implies that $\frac{1}{k!}{ dk \choose {d,\dots,d}}$ is odd.

\medskip
\noindent{\bf (ii)}
The product ${(d-\ell)k+\ell \choose d}\cdot \frac{1}{(k-1)!}{(d-\ell)(k-1) \choose {d-\ell,\dots,d-\ell}}$ is odd if and only if both factors are odd.
We know that
\[
\frac{1}{(k-1)!}{(d-\ell)(k-1) \choose {d-\ell,\dots,d-\ell}}\text{ is odd}
\qquad\Longleftrightarrow\qquad
d-\ell=2^a\text{ for some }a\geq 0.
\]
Therefore it remains to discuss when ${(d-\ell)k+\ell \choose d}$ is odd, assuming that $d=2^a+\ell$ where $a\geq 0$ and $2^a > \ell > 0$, which follows from the assumption $d = 2^a + \ell > 2\ell > 0$.
Now, the following sequence of equivalences concludes the proof of the second part of the proposition:
\begin{align*}
	{(d-\ell)k+\ell \choose d} \ \text{is odd} &\Longleftrightarrow
	{2^ak+\ell \choose 2^a+\ell}\ \text{is odd} \\
	&\Longleftrightarrow  E_2(2^ak+\ell)=E_2(2^a+\ell)+E_2(2^a(k-\ell))\\
	&\overset{\eqref{eq : Ep}}{\Longleftrightarrow}
	2^ak+\ell-\alpha_2(2^ak+\ell)= \\
	&\qquad\qquad \quad   2^a+\ell-\alpha_2(2^a+\ell)+2^a(k-1)-\alpha_2(2^a(k-1))\\
	&\Longleftrightarrow
	\alpha_2(2^ak+\ell)=\alpha_2(2^a+\ell)+\alpha_2(2^a(k-1))\\
	&\Longleftrightarrow
	\alpha_2(2^ak+\ell)=\alpha_2(2^a+\ell)+\alpha_2(k-1)\\
	&\overset{2^a > \ell}{\Longleftrightarrow}
		\alpha_2(k)+\alpha_2(\ell)=1+\alpha_2(\ell)+\alpha_2(k-1)\\
		&\Longleftrightarrow  k\text{ is odd.}
\end{align*}

\section{Appendix: A $\Wk_k$-CW structure on the join configuration space}
\label{sec : appendix}

In this section, based on the work in \cite[Sec.\,3]{BlagojevicFrickHaaseZiegler2}, we briefly present a relative $\Wk_k$-CW structure on the join configuration space $X_{d,k}$ which we use in the obstruction theory proof of Theorem \ref{th : main 2.5}.
In particular, this means that the induced $\Wk_k$-CW structure transforms the subspace $X_{d,k}^{>1}$ into an $\Wk_k$-CW subcomplex.
As in the original work, the relative $\Wk_k$-CW complex we construct is denoted by $X:=(X_{d,k},X_{d,k}^{>1})$.
The construction proceeds in two steps:
\begin{compactitem}[\quad---]
\item the Euclidean space $\R^{(d+1)\times k}$ is partitioned into a union of (disjoint) relatively open cones, each containing the origin in its closure, on which the $\Wk_k$-action operates by linearly permuting the cones (Section \ref{sub : Appendix - 01}), and then
\item the open cells of a regular $\Wk_k$-CW model are defined as intersections of these relatively open cones with the unit sphere of the Euclidean space $\R^{(d+1)\times k}$, (Section \ref{sub : Appendix - 02}).
\end{compactitem}

\subsection{A stratification of the Euclidean space $\R^{(d+1)\times k}$}
\label{sub : Appendix - 01}

First we recall the notion of a stratification of a Euclidean space.

\medskip
Let $E$ be a Euclidean space.
A \emph{stratification of $E$ (by cones)} is a finite collection $\C$ of subsets of $E$ that satisfies the following properties:
\begin{compactitem}[\quad ---]
\item $\C$ consists of finitely many non-empty relatively open polyhedral cones of $E$,
\label{def_item_pw_disj_open_pol_cones}
\item $\C$ is a partition of $E$, i.e., $E = \biguplus_{C \in \mathcal{C}} C$,
\label{def_item_R_is_union_of_strata}
\item the closure $\overline C$ of every cone $C \in \C$ is a union of cones in $\C$.
\label{def_item_closure_is_union_of_strata}
\end{compactitem}
An element of the family $\C$ is called a \emph{stratum}.

\medskip
In order to define the desired stratification of the Euclidean space $\R^{(d+1)\times k}$ we first fix following data:
\begin{compactitem}[\quad --- ]
\item a permutation $\sigma:=(\sigma_1,\sigma_2,\ldots,\sigma_k)\in\Sym_k$,
\item a collection of signs $S:=(s_1,\ldots,s_k)\in\{+1,-1\}^k$, and
\item a collection of integers $I:=(i_1,\ldots,i_k)\in\{1,\ldots,d+2\}^k$.
\end{compactitem}
Furthermore, we set $x_0$ to be the origin of the Euclidean space $\R^{(d+1)\times k}$, $\sigma_0=0$ and $s_0=1$.
Now we define the cone
\[
C_I^S(\sigma)=C_{i_1,\ldots,i_k}^{s_1,\ldots,s_k}(\sigma_1,\sigma_2,\ldots,\sigma_k)\subseteq \R^{(d+1)\times k}
\]
to be the collection of all points $(x_1,\ldots,x_k)\in \R^{(d+1)\times k}$, $x_i=(x_{1,i},\ldots,x_{d+1,i})$, such that for each $1\leq t\leq k$,
\begin{compactitem}[\quad --- ]
\item if  $1\leq i_t\leq d+1$, then $s_{t-1}x_{i_t,\sigma_{t-1}}<s_{t}x_{i_t,\sigma_{t}}$ with $s_{t-1}x_{i',\sigma_{t-1}}=s_{t}x_{i',\sigma_{t}}$ for every $i'<i_t$, and
\item if $i_t=d+2$, then $s_{i_{t-1}}x_{\sigma_{t-1}}=s_{i_{t}}x_{\sigma_{t}}$.
\end{compactitem}
A triple $(\sigma|I|S)\in\Sym_k\times \{1,\ldots, d+2\}^k\times \{+1,-1\}^k$ is called a \emph{symbol}.
In the notation of symbols we write instead of the signs $\{+1,-1\}$ just $\{+,-\}$.
The set of ``inequalities'' which define  the stratum $C_I^S(\sigma)$ can be shortly denoted by:
\begin{align*}
C_I^S(\sigma)&=C_{i_1,\ldots,i_k}^{s_1,\ldots,s_k}(\sigma_1,\sigma_2,\ldots,\sigma_k)\\
			 &=\{ (x_1,\ldots,x_k)\in \R^{(d+1)\times k} : 0<_{i_1}s_1x_{\sigma_1}<_{i_2}s_2x_{\sigma_2}<_{i_3}\cdots<_{i_k}s_kx_{\sigma_k}\},
\end{align*}
where $y<_iy'$, for $1\leq i\leq d+1$, means that $y$ and $y'$ agree in the first $i-1$ coordinates and at the $i$-th coordinate $y_i~<~y'_i$.
The inequality $y<_{d+2}y'$ stands for $y=y'$.
Furthermore, each $C_I^S(\sigma)$ equals to the relative interior of a polyhedral cone in $(\R^{d+1})^k$ of codimension $(i_1-1)+\cdots+(i_k-1)$, that means
\[
\dim C_{i_1,\ldots,i_k}^{s_1,\ldots,s_k}(\sigma_1,\sigma_2,\ldots,\sigma_k)=(d+2)k-(i_1+\cdots+i_k).
\]

\medskip
Let $\C$ denote the family of all strata $C_I^S(\sigma)$ defined by all symbols, that is
\[
\C=\big\{C_I^S(\sigma) : (\sigma|I|S)\in \Sym_k\times \{1,\ldots, d+2\}^k\times \{+1,-1\}^k\big\}.
\]
Note that different symbols may define the same sets, additionally:
\[
C_I^S(\sigma)\cap C_{I'}^{S'}(\sigma)\neq\emptyset \ \Longleftrightarrow \ C_I^S(\sigma) = C_{I'}^{S'}(\sigma).
\]
Since it's not hard to check that $\bigcup\C=\R^{(d+1)\times k}$ we conclude that $\C$ is a stratification of the Euclidean space $\R^{(d+1)\times k}$.

\medskip
The action of the group $\Wk_k$ on the Euclidean space $\R^{(d+1)\times k}$ induces an action on the stratification $\C$ by as follows:
\begin{align}
\pi\cdot C^S_I(\sigma) &= C^S_I(\pi\sigma),\label{eq:action of pi}\\
\varepsilon_t\cdot  C^S_I(\sigma) &=\varepsilon_t\cdot C_{i_1,\ldots,i_k}^{s_1,\ldots,s_k}(\sigma_1,\sigma_2,\ldots,\sigma_k)\nonumber\\
                                  &= C_{i_1,\ldots,i_k}^{s_1,\ldots,-s_t,\ldots,s_k}(\sigma_1,\sigma_2,\ldots,\sigma_k),\label{eq:action of epsilon}
\end{align}
where $\pi\in\Sym_k$, $1\leq t\leq k$, and $\varepsilon_1,\ldots,\varepsilon_k$ are the canonical generators of the subgroup $(\Z/2)^k$ of $\Wk_k$.

\subsection{The $\Wk_k$-CW complex induced from the stratification $\C$}
\label{sub : Appendix - 02}

The $\Wk_k$-CW complex structure on the joint configuration space $X_{d,k}=S(\R^{(d+1)\times k})$ is defined by intersecting each stratum $C^S_I(\sigma)$ of the stratification $\C$ with the unit sphere~$S(\R^{(d+1)\times k})$.
Since the stratum $C^S_I(\sigma)$ is a relatively open cone which does not contain a line, the intersection
\[
D^S_I(\sigma)=D_{i_1,\ldots,i_k}^{s_1,\ldots,s_k}(\sigma_1,\sigma_2,\ldots,\sigma_k):=
C_{i_1,\ldots,i_k}^{s_1,\ldots,s_k}(\sigma_1,\sigma_2,\ldots,\sigma_k) \cap S(\R^{(d+1)\times k})
\]
has to be an open cell of dimension $(d+2)k-(i_1+\cdots+i_k)-1$.
The action of the group $\Wk_k$ on the cells $D^S_I(\sigma)$ is induced by from \eqref{eq:action of pi} and \eqref{eq:action of epsilon} as follows:
\begin{align}
\pi\cdot D^S_I(\sigma) &= D^S_I(\pi\sigma),\label{eq:action of pi - 2}\\
\varepsilon_t\cdot  D^S_I(\sigma) &=\varepsilon_t\cdot D_{i_1,\ldots,i_k}^{s_1,\ldots,s_k}(\sigma_1,\sigma_2,\ldots,\sigma_k)\nonumber\\
                                  &= D_{i_1,\ldots,i_k}^{s_1,\ldots,-s_t,\ldots,s_k}(\sigma_1,\sigma_2,\ldots,\sigma_k).\label{eq:action of epsilon - 2}
\end{align}
In this way we have defined a regular $\Wk_k$-CW structure on $X_{d,k}$.
In particular, the action of the group $\Wk_k$ on the Euclidean space $\R^{(d+1)\times k}$ restricts to the cellular action on the model.
Thus, we have the following theorem \cite[Thm.\,3.11]{BlagojevicFrickHaaseZiegler2}.

\begin{theorem}
\label{th : CW-model}
Let $d\geq 1$ and $k\geq 1$ be integers.
The family of cells
\[
\big\{D^S_I(\sigma) : (\sigma|I|S)\neq (\sigma|d+2,\ldots,d+2|S)\big\}
\]
forms a finite regular $((d+1)k-1)$-dimensional $\Wk_k$-CW complex $X:=(X_{d,k},X_{d,k}^{>1})$ which models the join configuration space $X_{d,k}=S(\R^{(d+1)\times k})$.
It has
\begin{compactitem}[ \quad --- ]
\item one full $\Wk_k$-orbit of cells in the maximal dimension $(d+1)k-1$ induced by the cell $D_{1,\ldots,1,1}^{+,\ldots,+,+}(1,2,\ldots,k)$, and
\item $k$ full $\Wk_k$-orbits of cells in dimension $(d+1)k-2$.
\end{compactitem}
The (cellular) $\Wk_k$-action on $X_{d,k}$ is given by relations\eqref{eq:action of pi - 2} and \eqref{eq:action of epsilon - 2}.
Furthermore the collection of cells
\[
\big\{D^S_I(\sigma) : i_s=d+2\text{  for some  }1\leq s\leq k\bigskip \}
\]
is a $\Wk_k$-CW subcomplex and models $X_{d,k}^{>1}$.
\end{theorem}

\medskip
As an illustration of a cell structure we analyze the cells $D_{1,\dots,1}^{+,\ldots,+}(1,2,\dots,k)$ and  $D_{1+\ell,\ldots,1+\ell,1}^{+,\ldots,+,+}(1,2,\ldots,k)$, which are used in the proofs of Proposition \ref{th : main 3}\eqref{th : main 3 : case 1} and \ref{th : main 3}\eqref{th : main 3 : case 2}.
First we recall \cite[Ex.\,3.12]{BlagojevicFrickHaaseZiegler2}.
\begin{example}
\label{example:boundary-1}
Let $d\geq 1$ and $k\geq 1$ be integers.
Consider the cell 
\[
\theta:=D^{+,+,+,\ldots,+}_{1,1,1,\ldots,1}(1,2,3,\ldots,k)
\] 
of the $\Wk_k$-CW complex $X_{d,k}$.
It is determined by the inequalities:
\[
0<_1 x_1<_1 x_2<_1 \cdots <_1 x_k.
\]

\medskip     
The cells of codimension one in the boundary of $\theta$ are obtained by introducing one of the following extra equalities:
\[
      x_{1,1}=0\,,\quad x_{1,1}=x_{1,2}\,,\quad \ldots\quad x_{1,k-1}=x_{1,k}.
\]
Each of these equalities will give two cells, hence there are, in total, $2k$ cells of codimension one in the boundary of the cell $\theta$.
\begin{compactenum}[\rm \quad (a)]
\item The equality $x_{1,1}=0$ induces cells:
\[
      \qquad\qquad\quad \gamma_1:=D^{+,+,+,\ldots,+}_{2,1,1,\ldots,1}(1,2,3,\ldots,k),\qquad \gamma_2:=D^{-,+,+,\ldots,+}_{2,1,1,\ldots,1}(1,2,3,\ldots,k)
\]
which are related, as sets, via $\gamma_2=\varepsilon_1\cdot\gamma_1$.
Both cells $\gamma_1$ and $\gamma_2$ belong to the linear subspace
\[
      V_1=\big\{(x_1,\ldots,x_k)\in \R^{(d+1)\times k} : x_{1,1}=0\big\}.
\]
\item The equality $x_{1,r-1}=x_{1,r}$ for $2\leq r\leq k$ gives cells:
\[
         \gamma_{2r-1}:=D^{+,+,+,\ldots,+}_{1,\ldots,1,2,1,\ldots,1}(1,\ldots,r-1,r,r+1,\ldots,k),
\]
\[
         \gamma_{2r}:=D^{+,+,+,\ldots,+}_{1,\ldots,1,2,1,\ldots,1}(1,\ldots,r,r-1,r+1,\ldots,k),
\]
satisfying $\gamma_{2r}=\tau_{r-1,r}\cdot\gamma_{2r-1}$.
In these cells the index $2$ in the subscript $1,\ldots,1,2,1,\ldots,1$ appears at the position $r$.
These cells are contained in the linear subspace
\[
      V_r=\big\{(x_1,\ldots,x_k)\in \R^{(d+1)\times k} : x_{1,r-1}=x_{1,r}\big\}.
\]
\end{compactenum}

Let $e_{\theta}, e_{\gamma_1},\dots,e_{\gamma_{2k}}$ denote a generators in the cellular chain group corresponding to $\theta, \gamma_1,\dots,\gamma_{2k}$.
The boundary of the cell~$\theta$ is contained in the union of the linear subspaces $V_1,\ldots,V_k$.
Therefore we can orient the cells $\gamma_{2i-1},\gamma_{2i}$ consistently with the orientation of $V_i$, $1\leq i\leq k$, given in such a way that
\[
      \partial e_{\theta}=(e_{\gamma_1}+e_{\gamma_2})+(e_{\gamma_3}+e_{\gamma_4})+\cdots + (e_{\gamma_{2k-1}}+e_{\gamma_{2k}}).
\]
Consequently,
\begin{equation}
	\label{rel - 1}
	\partial e_{\theta}=(1+(-1)^d\varepsilon_1)\cdot e_{\gamma_1}+\sum_{i=2}^{k} (1+(-1)^d\tau_{i-1,i})\cdot e_{\gamma_{2i-1}}.
\end{equation}

\end{example}

\begin{example}\label{ex:cell_example}
Let $d \geq 1$, $k \geq 1$, and $1 \leq \ell \leq d-1$.
Consider the cell
\[
\theta:=D_{1+\ell,\ldots,1+\ell,1}^{+,\ldots,+,+}(1,2,\ldots,k)
\]
of of the $\Wk_k$-CW complex $X_{d,k}$, which is given by
\[
        0 <_{1+\ell} x_1 <_{1+\ell} x_2 <_{1+\ell} \dots <_{1+\ell} x_{k-1} <_1 x_k.
\]
More precisely, it is given by the inequalities
\[
        0=x_{1,1}=\dots=x_{1,k-1}<x_{1,k},\qquad 0=x_{r,1}=\dots=x_{r,k-1}
\]
for all $2\leq  r \leq \ell$, and
\[
        0 < x_{\ell+1,1} < \dots < x_{\ell+1,k-1}.
\]

\medskip
The cells of codimension one in the boundary of the cell $\theta$ are induced by addition of one of the following extra equalities:
\[
    x_{\ell+1,1}=0\,,\quad x_{\ell+1,1}=x_{\ell+1,2}\,,\quad \ldots,\quad x_{\ell+1,k-2}=x_{\ell+1,k-1}\,,\quad x_{1,k-1}=x_{1,k}.
\]
We have the following cells of codimension $1$ in the boundary of $\theta$:

\begin{compactenum}[\rm \quad (a)]

\item
The equality $x_{\ell+1,1}=0$ gives cells:
\begin{align*}
    \nu_1 &:=D^{+,+,+,\ldots,+,+}_{\ell+2,\ell+1,\ell+1,\ldots,\ell+1,1}(1,2,3,\ldots,k),	\\
    \nu_2 &:=D^{-,+,+,\ldots,+,+}_{\ell+2,\ell+1,\ell+1,\ldots,\ell+1,1}(1,2,3,\ldots,k),
\end{align*}
which on the level of sets are related by $\nu_2=\varepsilon_1\cdot\nu_1$.
Both cells $\gamma_1$ and $\gamma_2$ belong to the linear subspace
\[
    V_1=\big\{(x_1,\ldots,x_k)\in \R^{(d+1)\times k} : x_{1,1}=0,\ \dots,\ x_{\ell+1,1}=0\big\}.
\]

\item
The equality $x_{\ell+1,r-1}=x_{\ell+1,r}$ for $2\leq r\leq k-1$ induces cells:
\begin{align*}
    \nu_{2r-1}&:=D^{+,+,+,\ldots,+,+}_{\ell+1,\ldots,\ell+1,\ell+2,\ell+1,\ldots,\ell+1,1}(1,\ldots,r-1,r,r+1,\ldots,k),	\\
    \nu_{2r}&:=D^{+,+,+,\ldots,+,+}_{\ell+1,\ldots,\ell+1,\ell+2,\ell+1,\ldots,\ell+1,1}(1,\ldots,r,r-1,r+1,\ldots,k),
\end{align*}
satisfying $\nu_{2r}=\tau_{r-1,r}\cdot\nu_{2r-1}$.
In these cells the index $\ell+2$ in the subscript $\ell+1,\ldots,\ell+1,\ell+2,\ell+1,\ldots,\ell+1,1$ is at the position $r$.
These cells belong to the linear subspace
\[
    V_r=\big\{(x_1,\ldots,x_k)\in \R^{(d+1)\times k} : x_{1,r-1}=x_{1,r},\ \dots,\  x_{\ell+1,r-1}=x_{\ell+1,r}\big\}.
\]

\item
In the case $\ell = 1$ the last equality $(0=)x_{1,k-1}=x_{1,k}$ induces $2k$ cells for each $1 \leq r \leq k$ of the form
\begin{align*}
    \mu_{2r-1}&:=D^{+,+,\ldots,+,\ldots,+,+}_{2,2,2,\dots,2,2}(1,\dots,r-1,k,r,\dots,k-1),	\\
    \mu_{2r}&:=D^{+,+,\ldots,-,\ldots,+,+}_{2,2,2,\dots,2,2}(1,\dots,r-1,k,r,\dots,k-1),
\end{align*}
satisfying $\mu_{2r} = \varepsilon_r \mu_{2r-1}$.
The minus-sign is on the $r$-th position.

\item
In the case $\ell > 1$ the last equality $(0=)x_{1,k-1}=x_{1,k}$ induces $2$ cells of the form
\begin{align*}
    \mu_{2k-1}&:=D^{+,+,+,\dots,+,+}_{\ell+1,\ell+1,\dots,\ell+1,2}(1,2,3,\dots,k),\\
    \mu_{2k}&:=D^{+,+,+,\dots,+,-}_{\ell+1,\ell+1,\dots,\ell+1,2}(1,2,3,\dots,k),
    \end{align*}
satisfying $\mu_{2k} = \varepsilon_k \mu_{2k-1}$.
Either way these cells belong to the subspace
\[
    V_k=\big\{(x_1,\ldots,x_k)\in \R^{(d+1)\times k} : 0=x_{1,1}=\cdots=x_{1,k}\big\}.
\]
\end{compactenum}

\medskip
Let $e_{\theta}, e_{\nu_1},\ldots,e_{\nu_{2k-2}}$, $e_{\mu_1},\dots,e_{\mu_{2(k-1)}}$, $e_{\mu_{2k-1}},e_{\mu_{2k}}$
denote generators in the cellular chain group that correspond to the cells $\theta$, $\nu_1,\ldots,\nu_{2k-2}$, $\mu_1,\dots,\mu_{2(k-1)}$, $\mu_{2k-1},\mu_{2k}$, respectively.
The boundary of the cell~$\theta$ is a subset of the union of the linear subspaces $V_1,\ldots,V_k$.
Hence, we can orient the subspaces and the cells consistently in such a way that for $\ell > 1$  the following equality holds
      \[
      \partial e_{\theta}=(e_{\nu_1}+e_{\nu_2})+\cdots + (e_{\nu_{2k-3}}+e_{\nu_{2k-2}})+ (e_{\mu_{2k-1}} + e_{\mu_{2k}}),
      \]
while for $\ell = 1$ we get
      \[
      \partial e_{\theta}=(e_{\nu_1}+e_{\nu_2})+\cdots + (e_{\nu_{2k-3}}+e_{\nu_{2k-2}})+ (e_{\mu_{1}} + e_{\mu_{2}}) + \dots + (e_{\mu_{2k-1}} + e_{\mu_{2k}}).
      \]
Thus,
\begin{align}
    \label{rel - 2}
\partial e_{\theta}&=(1+(-1)^{d-1}\varepsilon_1)e_{\nu_1}+\sum_{i=2}^{k-1}(1+(-1)^{d-1}\tau_{i-1,i})e_{\nu_{2i-1}}+ \\
&\qquad\qquad\qquad\qquad\qquad\qquad\qquad\qquad\qquad\qquad\qquad \sum_{i=w}^{k} (1+ (-1)^{d} \varepsilon_{i})e_{\mu_{2i}}, \nonumber
\end{align}
where $w={\small \begin{cases}1, & \ell = 1\\ k, & \ell \neq 1\end{cases}}$.

\end{example}


\small

\end{document}